\pgfplotsset{width=15cm,compat=1.9}
\newtheorem{theorem}{Theorem}
\newtheorem{proposition}{Proposition}
\newtheorem*{definition}{Definition}
\newtheorem*{acknowledgments}{Acknowledgments}
\newtheorem*{remark}{Remark}
\newtheorem{corollary}{Corollary}[theorem]
\newtheorem{lemma}[theorem]{Lemma}
\newcommand{\Q}{\mathbb{Q}}
\newcommand{\R}{\mathbb{R}}
\newcommand{\Z}{\mathbb{Z}}
\newcommand{\N}{\mathbb{N}}
\newcommand{\Ln}{\mathcal{L}}
\renewcommand{\vec}[1]{\mathbf{#1}}
\title{\textbf{A Khintchine type theorem for affine subspaces}}
\author{Daniel C. Alvey\footnote{\textbf{email:} \texttt{dalvey@wesleyan.edu}} \\ Wesleyan University}
\date{}
\begin{document}
\maketitle

\pagestyle{fancy}\renewcommand{\headrulewidth}{0pt}\fancyhf{}
\lhead{}
\cfoot{\thepage}

\begin{abstract}
\noindent We show that affine subspaces of Euclidean space are of Khintchine type for divergence under certain multiplicative Diophantine conditions on the parametrizing matrix. This provides evidence towards the conjecture that all affine subspaces of Euclidean space are of Khintchine type for divergence, or that Khintchine's theorem still holds when restricted to the subspace. This result is proved as a special case of a more general Hausdorff measure result from which the Hausdorff dimension of $W(\tau)$ intersected with an appropriate subspace is also obtained.
\end{abstract}

\section{Introduction}

Simultaneous Diophantine approximation is focused on how well points in $\R^d$ can be approximated by points in $\Q^d$. Dirichlet's simultaneous approximation theorem tells us that for every point $\vec{x} \in \R^d,$ we have that $\lVert q\vec{x}\rVert< q^{-1/d}$ for infinitely many $q \in \Z$ where $\lVert \cdot \rVert$ is the sup norm distance to the nearest integer point. A natural question is whether this bound can be improved, and it was shown by Khintchine \cite{khintchine1926} that if we consider the $\psi$-\textit{approximable} $\vec{x} \in \R^d$, for which $\lVert q\vec{x}\rVert<\psi(q)$ for infinitely many $q \in \N$ where $\psi: \N \to \R^+$ is any decreasing function, almost every or almost no points are $\psi$-approximable depending on if $\sum_{q \in \N} \psi(q)^d$ diverges or converges, respectively. 

However, Khintchine's theorem tells us nothing about the size of the $\psi$-approximable points in subspaces of measure zero. It would be desirable to obtain a statement such as the following: The measure of the $\psi$-approximable points in a subspace $\mathcal{M} \subseteq \R^d$ of dimension $n$ is of zero or full $n$-dimensional measure depending on if the sum $\sum_{q \in \N} \psi(q)^d$ converges or diverges. However, if $\sum_{q \in \N}\psi(q)^2$ converges there are explicit examples of lines in the plane for which the above statement is not true. As such, it makes sense to focus efforts on each side of the problem separately. To do so, the concept of \textit{Khintchine type} is very helpful.

A subspace $\mathcal{M} \subseteq \R^d$ is of \textit{Khintchine type for divergence} if for any decreasing function $\psi: \N \to \R^+$ such that $\sum_{q \in \N}\psi(q)^d$ diverges, almost every point on $\mathcal{M}$ is $\psi$-approximable, and of \textit{Khintchine type for convergence} if almost no point is $\psi$-approximable when the sum converges. Any analytic, non-degenerate manifold of Euclidean space is of Khintchine type for divergence \cite{beresnevich2012}, and a large class of manifolds are of Khintchine type for convergence \cite{BRVZ17, simmons2018}.

However, less is known about affine subspaces. Currently, from \cite{RSS17} it is known that affine coordinate subspaces of dimension at least two, and affine coordinate subspaces of dimension one with a Diophantine restriction on the shift vector are of Khintchine type for divergence. In the convergence side of the theory, certain coordinate hyperplanes are of Khintchine type for convergence \cite{R15} and affine subspaces with a Diophantine restriction on the parametrizing matrix are of Khintchine type for convergence \cite{huang2018}.

It is worth noting that these same questions may be asked in the case of \textit{dual approximation} where the interest is in measure theoretic properties of the set $\vec{x} \in \R^d$ such that $\lVert \langle \vec{x}, \vec{q} \rangle \rVert<\psi\left(|\vec{q}|\right)$ for infinitely many $\vec{q} \in \Z^d$. In this dual case, there is a similar concept of dual Khintchine type for divergence and convergence. It is known that any analytic, non-degenerate manifold is of both dual Khintchine type for convergence  \cite{BKM01} and divergence  \cite{BBKM02}. 

As in the simultaneous case, less is known about affine subspaces for dual approximation. Lines through the origin with a Diophantine restriction are of dual Khintchine type for convergence  \cite{BBDD00}, and hyperplanes with a Diophantine restriction are of both dual Khintchine type for convergence \cite{ghosh05}, and divergence \cite{ghosh11}.

The purpose of this paper is to prove a simultaneous, divergence Khintchine type theorem for affine subspaces with a certain Diophantine restriction on the choice of parametrizing matrix. First we need to introduce some notation. After a change of variables, any $n$-dimensional affine subspace can be defined as 
$$\mathcal{L}=\{(\vec{x},\vec{x}\mathfrak{a}+\vec{a}_0) \in \R^d : \vec{x} \in \R^{n}\} \quad \text{or} \quad \mathcal{L}=\{(\vec{x},\tilde{\vec{x}}\tilde{\mathfrak{a}}) \in \R^d : \vec{x} \in \R^{n}\}$$
where $\vec{x}=(x_1, \ldots, x_{n})\in \R^{n}$, $\mathfrak{a} \in \text{Mat}_{n,d-n}$, $\vec{a}_0 \in \R^{d-n}$, $\tilde{\vec{x}}=(1,x_1, \ldots, x_{n}) \in \R^{n+1}$, and $\tilde{\mathfrak{a}}\in \mbox{Mat}_{n+1,d-n}$ such that $\tilde{\mathfrak{a}}=\left[\begin{array}{l}\vec{a}_0 \\\mathfrak{a} \end{array}\right]$. Note that the matrix $\mathfrak{a}$ accounts for the ``tilt" of the subspace, and the vector $\vec{a}_0$ accounts for the ``shift" of the subspace off the origin.

Also necessary for this result is the concept of multiplicative Diophantine approximability of matrices. For $\omega>0,$ define
\begin{equation*}
\textbf{MAD}(m,n,\omega)=\left\lbrace \mathfrak{a} \in \text{Mat}_{m,n} : \inf_{\vec{j} \in \Z^n  \setminus \{\vec{0}\}} |\vec{j}|^{\omega} \prod_{u=1}^m \lVert \vec{j} \cdot \text{row}_u(\mathfrak{a}) \rVert >0 \right\rbrace.
\end{equation*}
We say that $\mathfrak{a} \in \text{Mat}_{m,n}$ has multiplicative Diophantine exponent $\omega(\mathfrak{a})$ if 
$$\omega(\mathfrak{a})=\inf \left\lbrace \omega \in \R: \mathfrak{a} \in \textbf{MAD}(m,n,\omega) \right\rbrace.$$
If $\mathfrak{a} \not\in \textbf{MAD}(m,n,\omega)$ for any $\omega \in \R$ then we say that $\omega (\mathfrak{a})=\infty.$
A matrix $\mathfrak{a} \in \text{Mat}_{m,n}$ is said to be \textit{multiplicatively badly approximable} if $\mathfrak{a} \in \textbf{MAD}(m,n,1).$ It is worth noting that in the case where $m=n=1$, the set $\textbf{MAD}(1,1,1)$ is exactly equal to the set of badly approximable numbers. Additionally, the Littlewood conjecture can be restated as  asserting that $\textbf{MAD}(2,1,1)$ is empty, and if $\mathfrak{a}\in \text{Mat}_{1,n}$, then $\omega(\mathfrak{a})$ is the dual Diophantine type $\mathfrak{a}$ when thought of as a vector. The reader is encouraged to refer to \cite{bugeaud09} for a survey of this concept. With this notation, We will show the following theorems.
\begin{theorem}\label{main theorem}
 Let $\mathcal{L}=\{(\vec{x},\vec{x}\mathfrak{a}+\vec{a}_0) \in \R^d : \vec{x} \in \R^{n}\}$ be an $n$-dimensional affine subspace of $\R^d$ with  $\mathfrak{a}\in \textup{Mat}_{n,d-n}$ and $\vec{a}_0 \in \R^{d-n}.$ Then $\mathcal{L}$ is of Khintchine type for divergence if $\omega(\mathfrak{a})<dn$.

\end{theorem}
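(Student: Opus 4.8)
The natural approach is to pull the problem back to $\R^n$ via the bi-Lipschitz parametrisation $\vec{x}\mapsto(\vec{x},\vec{x}\mathfrak{a}+\vec{a}_0)$ of $\mathcal{L}$. Since $\|q(\vec{x},\vec{x}\mathfrak{a}+\vec{a}_0)\|<\psi(q)$ is the same as the pair of conditions $\|q\vec{x}\|<\psi(q)$ and $\|q(\vec{x}\mathfrak{a}+\vec{a}_0)\|<\psi(q)$, it suffices to show that for every decreasing $\psi$ with $\sum_q\psi(q)^d=\infty$ the set
$$\mathcal{A}(\psi)=\Big\{\vec{x}\in[0,1]^n:\ \|q\vec{x}\|<\psi(q)\ \text{and}\ \|q(\vec{x}\mathfrak{a}+\vec{a}_0)\|<\psi(q)\ \text{for infinitely many }q\Big\}$$
has full $n$-dimensional Lebesgue measure, and then to localise. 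For fixed $q$ and $\vec{p}\in\Z^n\cap[0,q]^n$ the box $B_{\vec{p}/q}=\{\vec{x}:|q\vec{x}-\vec{p}|_\infty<\psi(q)\}$ has side $\asymp\psi(q)/q$, and on it the quantity $q(\vec{x}\mathfrak{a}+\vec{a}_0)$ stays within a constant multiple (depending on $\mathfrak{a}$) of $\psi(q)$ of $\vec{p}\mathfrak{a}+q\vec{a}_0$; hence $B_{\vec{p}/q}$ contributes a sub-box of comparable measure to $\mathcal{A}(\psi)$ precisely when $\vec{p}$ lies in
$$P_q=\big\{\vec{p}\in\Z^n\cap[0,q]^n:\ \|\vec{p}\mathfrak{a}+q\vec{a}_0\|_\infty\ll_{\mathfrak{a}}\psi(q)\big\}.$$
So the relevant resonant points are $\{\vec{p}/q:\vec{p}\in P_q\}$, and the heart of the matter is the lower bound $\#P_q\gtrsim q^n\psi(q)^{d-n}$ — valid for $q$ in a set over which $\sum\psi(q)^d$ still diverges — together with enough equidistribution of these points in $[0,1]^n$ that they do not cluster at the relevant scale.

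I would phrase the argument through the \emph{local ubiquity} machinery of Beresnevich--Dickinson--Velani, which is also what yields the Hausdorff-measure refinement mentioned in the abstract. The goal is to exhibit a function $\rho$ and a constant $\kappa>0$ with
$$\Big|\,B\cap\bigcup_{Q/2\le q\le Q}\ \bigcup_{\vec{p}\in P_q}B\!\big(\tfrac{\vec{p}}{q},\,\rho(Q)\big)\,\Big|\ \ge\ \kappa\,|B|$$
for every large $Q$ and every ball $B\subseteq[0,1]^n$. This is where the hypothesis on $\mathfrak{a}$ is used. Expanding the indicator of the set of $\vec{p}\in[0,q]^n$ with $\vec{p}/q\in B$ and $\|\vec{p}\mathfrak{a}+q\vec{a}_0\|_\infty\ll_{\mathfrak{a}}\psi(q)$ in a Fourier series over modes $(\vec{k},\vec{j})\in\Z^n\times\Z^{d-n}$ (after smoothing, or via a Beurling--Selberg minorant), the main term has the expected order $(q\rho(Q))^n\psi(Q)^{d-n}$, while the mode $(\vec{k},\vec{j})$ contributes a product of geometric sums bounded by $\prod_{u=1}^{n}\min\!\big(q,\ \|k_u/q+\vec{j}\cdot\text{row}_u(\mathfrak{a})\|^{-1}\big)$; the shift $q\vec{a}_0$ enters only as a unimodular phase, which is the reason no Diophantine condition on $\vec{a}_0$ is required. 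Fixing $\omega\in(\omega(\mathfrak{a}),dn)$ with $\mathfrak{a}\in\textbf{MAD}(n,d-n,\omega)$, so that $\prod_{u}\|\vec{j}\cdot\text{row}_u(\mathfrak{a})\|\ge c\,|\vec{j}|^{-\omega}$ for all $\vec{j}\neq\vec{0}$, one bounds the error from the modes with $\vec{j}\neq\vec{0}$ (carefully handling the rational shifts $k_u/q$ and exploiting the truncation at $q$ in each factor) and shows it is dominated by the main term; balancing this error against $q^n\psi(q)^{d-n}$ — using that a decreasing $\psi$ with $\sum\psi^d=\infty$ cannot be too small on a positive proportion of dyadic blocks, after the standard reduction to a well-behaved $\psi$ — is precisely where the threshold $\omega(\mathfrak{a})<dn$ comes from.

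With local ubiquity established, the general ubiquity theorem applies directly: the hypothesis $\sum_q\psi(q)^d=\infty$ is exactly what makes the relevant volume sum diverge, so $\mathcal{A}(\psi)$ has full Lebesgue measure; running the same argument against a Hausdorff measure $\mathcal{H}^s$ gives the more general statement and, together with a routine covering bound for the complementary inequality, the value of $\dim\big(\mathcal{L}\cap W(\tau)\big)$. Specialising to $s=n$ gives full measure on $[0,1]^n$, and a standard covering argument transfers this to full measure on all of $\mathcal{L}$, which is Theorem~\ref{main theorem}.

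The substantive difficulty is the counting/ubiquity step: one must control the \emph{joint} distribution of $\vec{p}/q$ in $[0,1]^n$ and of $\vec{p}\mathfrak{a}+q\vec{a}_0\bmod\Z^{d-n}$ uniformly, and the cancellation in the exponential sums has to be strong enough to give $\#P_q$ of the right order at the correct scale $\rho(Q)$; obtaining the sharp exponent $dn$ — rather than the weaker bound that a crude treatment of the error would yield — is the delicate point, and is where most of the work lies. The remaining ingredients (the bi-Lipschitz reduction, the smoothing, the passage to a well-behaved $\psi$, and the invocation of the ubiquity theorem) are routine.
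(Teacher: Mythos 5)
Your overall framework is the right one --- reduce to a local ubiquity statement on $[0,1]^n$ for the resonant points $\vec{p}/q$ with $\lVert \hat{\vec{p}}\tilde{\mathfrak{a}}\rVert$ small, prove the counting input with Beurling--Selberg polynomials so that $\vec{a}_0$ enters only as a phase and the \textbf{MAD} hypothesis controls the non-zero modes via $\sum_{0<|\vec{j}|\le J}\prod_u\lVert\vec{j}\cdot\text{row}_u(\mathfrak{a})\rVert^{-1}\ll J^{\omega}(\log J)^n$, and then invoke the Beresnevich--Dickinson--Velani theorem. This matches the paper. But the mechanism you propose for establishing local ubiquity is different from the paper's and, as stated, contains the gap you yourself flag as ``the substantive difficulty.'' You want a \emph{lower} bound $\#P_q\gtrsim q^n\psi(q)^{d-n}$ (on a divergent set of $q$) together with a non-clustering statement. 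That route requires a Selberg \emph{minorant} argument in which the main term dominates the error from below, and even then you would still need the upper bound on the count restricted to small balls to rule out clustering --- so you would be proving two counting estimates, the harder of which you have not carried out.

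The paper avoids the lower bound entirely. Minkowski's theorem on linear forms, applied to the $(d+1)\times(d+1)$ system in $(r_1,\dots,r_{d-n},p_1,\dots,p_n,q)$ whose determinant is exactly $1$, shows that for \emph{every} $\vec{x}\in[0,1]^n$ and every $N$ there is some $\hat{\vec{p}}$ with $|\hat{\vec{p}}|\le N$, $\lVert\hat{\vec{p}}\tilde{\mathfrak{a}}\rVert<\psi(N)/2$, and $|\vec{x}-\vec{p}/q|<\rho(N)$ where $\rho(N)=2^{(d-n)/n}N^{-(n+1)/n}\psi(N)^{-(d-n)/n}$ is chosen precisely so that the product of the bounds equals the determinant. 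Hence the balls $B(\vec{p}/q,\rho(k^t))$ over \emph{all} admissible $\hat{\vec{p}}$ with $|\hat{\vec{p}}|\le k^t$ cover $B$ exactly. One then needs only the \emph{upper} bound (Selberg majorant plus the \textbf{MAD} estimate) on the number of admissible $\hat{\vec{p}}$ with $|\hat{\vec{p}}|\le k^{t-1}$ and $\vec{p}/q\in 2B$ to conclude that those low-height points cover at most $(1-\kappa)m(B)$, forcing the points with $k^{t-1}<|\hat{\vec{p}}|\le k^t$ to cover at least $\kappa m(B)$. The threshold $\omega(\mathfrak{a})<dn$ (the case $s=n$ of $\omega<\tfrac{n(d-n+s)}{n+1-s}$) enters exactly where you predicted --- in making the $J^{\omega}(\log J)^n$ error negligible against the main term $\psi(k^t)^{d-n}k^{(t-1)(n+1)}m(B)$, after the harmless reduction to $\psi(q)\ge q^{-1/d-\epsilon}$, which the paper justifies a posteriori with the Hausdorff--Cantelli lemma. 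If you replace your lower-bound-plus-equidistribution step with this Minkowski covering argument, the rest of your outline goes through as written.
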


This requirement on $\mathfrak{a}$ is the exact same as the requirement for an affine subspace to be of strong Khintchine type for convergence in  \cite[Theorem 1]{huang2018}. Note that this restriction is only on the part of the parametrizing matrix responsible for the ``tilt" of the subspace. Therefore, whether or not a given affine subspace with the ``correct" tilt is of Khintchine type for divergence is independent of its shift off the origin. 

It is interesting to note that \cite[Theorem 2,(ii)]{RSS17} is a result strictly about affine coordinate subspaces, which states that an affine coordinate subspace $\{\vec{x}\} \times \R$ of $\R^d$ of dimension one is of Khintchine type for divergence if $\omega(\vec{x})<d$. Since the dimension of this subspace is one, this restriction also aligns exactly with the restriction in Theorem \ref{main theorem}. This is of note since the restriction of Theorem \ref{main theorem} is only on the tilt of the subspace, while not making any statement about subspaces which are not tilted. Whereas, the restriction of \cite[Theorem 2,(ii)]{RSS17} is only on the shift of the subspace, while not making any statement about subspaces which are tilted in any way. This is made more interesting by the fact that the methods of proof also are different. While both utilize the ubiquitous systems framework, the counting result is shown by using a geometry of numbers argument in \cite[Theorem 2,(ii)]{RSS17} and by using Selberg functions to translate the counting problem into a dual problem in this result.

Theorem \ref{main theorem} follows as a special case of the following Jarn\'ik type for divergence theorem regarding the Hausdorff measure of the set $W(\psi) \cap \Ln$.

\begin{theorem}\label{main jarnik}
Let $\mathcal{L}=\{(\vec{x},\vec{x}\mathfrak{a}+\vec{a}_0) \in \R^d : \vec{x} \in \R^{n}\}$ be an $n$-dimensional affine subspace of $\R^d$ with  $\mathfrak{a}\in \textup{Mat}_{n,d-n}$ and $\vec{a}_0 \in \R^{d-n}$ such that $\omega(\mathfrak{a})<\frac{n(d-n+s)}{n+1-s}.$ Let $0\leq s\leq n$ and let $\psi: \R \to \R^+$ be a decreasing function. Then 
\begin{equation}\label{jarnik}
\mathcal{H}^s(W(\psi) \cap \Ln)=\mathcal{H}^s(\Ln) \quad \textup{if} \quad \sum_{q=1}^{\infty} \psi(q)^{d-n+s}q^{n-s} = \infty.
\end{equation}
\end{theorem}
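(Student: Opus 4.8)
The plan is to establish Theorem~\ref{main jarnik} using the \emph{ubiquitous systems} framework of Beresnevich--Dickinson--Velani, which reduces a Jarn\'ik-type divergence statement to a suitable counting/density estimate for rational approximants lying near $\Ln$. First I would set up the ambient objects on the parameter space $\R^n$: for a point $\vec{x}\in\R^n$ the point $(\vec{x},\vec{x}\mathfrak{a}+\vec{a}_0)\in\Ln$ is $\psi$-approximable precisely when there are infinitely many $q\in\N$ and $\vec{p}\in\Z^n$ with $|q\vec{x}-\vec{p}|<\psi(q)$ \emph{and} simultaneously $\|q(\vec{x}\mathfrak{a}+\vec{a}_0)\|<\psi(q)$. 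The second family of conditions, $\|q(\vec{x}\mathfrak{a}+\vec{a}_0)\|<\psi(q)$, is a \emph{dual}-type condition in the $\vec{x}$ variable once one expands $q(\vec{x}\mathfrak{a}+\vec{a}_0)$ column by column; this is the point at which the hypothesis $\omega(\mathfrak{a})<\frac{n(d-n+s)}{n+1-s}$ must enter, controlling how often the linear forms determined by the columns of $\mathfrak{a}$ can be small. So the resonant sets will be the rational hyperplanes/points $\vec{p}/q$ in $\R^n$, restricted to those $(q,\vec{p})$ for which $q(\vec{a}_0 + (\vec{p}/q)\mathfrak{a})$ is within $O(\psi(q)+ q\cdot q^{-1}\cdot(\text{error}))$ of $\Z^{d-n}$, i.e. the ``good'' denominators surviving the multiplicative Diophantine filter.

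The core step is the counting lemma: I would show that the admissible pairs $(q,\vec{p})$ with $q\asymp Q$ form a \emph{locally ubiquitous} system relative to the function $\rho(Q)\asymp Q^{-1-1/n}\cdot(\text{factor from }d-n\text{ extra constraints})$, meaning that for a generic ball $B\subseteq\R^n$ the union of $\rho$-neighborhoods of the surviving resonant points covers a positive proportion of $B$. Following the strategy advertised in the introduction, rather than a geometry-of-numbers argument I would insert \emph{Selberg magic functions} (majorant/minorant trigonometric polynomials for the indicator of the target box $\|{\cdot}\|<\psi(q)$ in the $(d-n)$ auxiliary coordinates) to rewrite $\sum_{q\asymp Q}\sum_{\vec p}\mathbf{1}[\|q(\vec x\mathfrak a+\vec a_0)-\dots\|<\psi(q)]$ as a main term plus exponential sums; the main term produces the expected density, and the error terms are estimated by Fourier expansion, turning them into \emph{dual} Diophantine sums of the form $\sum_{\vec j}\|\vec j\cdot\mathrm{row}_u(\mathfrak a)\|^{-1}$-type quantities that are exactly what $\textbf{MAD}(n,d-n,\omega)$ membership controls. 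The inequality $\omega(\mathfrak a)<\frac{n(d-n+s)}{n+1-s}$ is precisely the threshold making these error terms $o(\text{main term})$; I would verify the arithmetic identity that at $s=n$ it reduces to $\omega(\mathfrak a)<dn$, recovering Theorem~\ref{main theorem}.

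With local ubiquity in hand, I would invoke the general Hausdorff-measure divergence theorem for ubiquitous systems (the $\mathcal H^s$ ``divergence Borel--Cantelli'' of Beresnevich--Dickinson--Velani), whose hypothesis is exactly that $\sum_Q \rho(Q)^{-s}\cdot(\text{gauge})$ diverges; unwinding $\rho$ and passing from the dyadic sum over $Q$ back to $\sum_q \psi(q)^{d-n+s}q^{n-s}$ gives the stated divergence condition. One technical wrinkle is the monotonicity/regularity of $\psi$: I would either assume the standard regularity ($\psi(q)q$ eventually decreasing, or pass to a well-chosen subsequence of scales) or run the ``cut-the-sum'' trick splitting into the ranges where $\psi(q)q^{1/n}$ is small vs. bounded below, handling the latter by a trivial Dirichlet argument. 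The main obstacle I expect is the counting lemma: one must count rational points $\vec p/q$ in a ball that survive $d-n$ independent near-integrality constraints twisted by $\mathfrak a$, uniformly over the location of the ball, and show the count matches the heuristic $\asymp \mathrm{vol}(B)\,Q^{n+1}\prod(\text{target lengths})$ up to errors absorbed by the Diophantine hypothesis — the Selberg-function bookkeeping and the extraction of the sharp exponent $\frac{n(d-n+s)}{n+1-s}$ from the resulting dual sums is where essentially all the work lies.
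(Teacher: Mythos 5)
Your overall architecture matches the paper's: filter the rationals $\vec{p}/q\in\Q^n$ by the near-integrality condition $\lVert\hat{\vec{p}}\tilde{\mathfrak{a}}\rVert<\psi(|\hat{\vec{p}}|)/2$ with $\hat{\vec{p}}=(q,\vec{p})$, count them in a ball via Selberg majorants whose nonzero Fourier modes become dual sums $\sum_{\vec{j}}\prod_u\lVert\vec{j}\cdot\mathrm{row}_u(\mathfrak{a})\rVert^{-1}$ controlled by the $\textbf{MAD}$ hypothesis, and feed the result into the Beresnevich--Dickinson--Velani ubiquity machine. The treatment of irregular $\psi$ (truncating at $\phi(q)=q^{\frac{s-n-1}{d-n+s}-\epsilon}$ and discarding the convergent part by Hausdorff--Cantelli) is also essentially what the paper does.

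However, there is a genuine gap in how you propose to establish local ubiquity itself. You state that you would avoid geometry of numbers entirely and extract the ``expected density'' from the main term of the Selberg expansion, i.e.\ from a \emph{lower} bound on the number of surviving resonant points in $B$. A count lower bound does not imply that the $\rho$-neighborhoods of those points cover a positive proportion of $B$: the surviving $\vec{p}/q$ could cluster, and nothing in the Fourier-analytic count rules that out at the scale $\rho(Q)\asymp Q^{-(n+1)/n}\psi(Q)^{-(d-n)/n}$, which is far smaller than the spacing you control. The paper closes this by a two-sided argument: a Minkowski linear-forms (Dirichlet-type) covering lemma showing that \emph{every} point of $B$ lies within $\rho(N)$ of some admissible $\vec{p}/q$ with $q\le N$, combined with the Selberg-function \emph{upper} bound on the count of admissible points with $q\le k^{t-1}$, so that the small-denominator contribution has measure $<(1-\kappa)m(B)$ and the top dyadic block $k^{t-1}<q\le k^t$ must cover the remaining $\kappa m(B)$. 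The Selberg functions are used only for the upper bound; the covering step is irreducibly a geometry-of-numbers input (the determinant computation $\left(\psi(N)/2\right)^{d-n}\rho(N)^nN=1$ is exactly what fixes the exponent in $\rho$). Without that covering lemma, or some substitute separation/equidistribution statement for the resonant points, your ubiquity claim does not follow from the counting estimate you describe.
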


Note that if $s<n$, then this implies that $\mathcal{H}^s(W(\psi) \cap \Ln)=\infty$. This requirement on $\mathfrak{a}$ is the exact requirement for an affine subspace to be of strong Jarn\'ik type for convergence in \cite[Theorem 2]{huang2018}. Note that in the case where $s=n$, $\mathcal{H}^n$ is comparable to Lebesgue measure on $\R^n$, so Theorem \ref{main theorem} can be thought of as the special case of Theorem \ref{main jarnik} when $s=n.$ Additionally, this result allows us to obtain a lower bound for the Hausdorff dimension of the important set $W(\tau)\cap\Ln.$ Define
$$W(\tau)=\{ \vec{x} \in \R^n : \lVert q \vec{x} \rVert < q^{-\tau} \text{ for infinitely many } q \in \N\},$$
and note that $W(\tau)=W(\psi)$ where we choose $\psi(q)=q^{-\tau}.$ In $\R$, the famous Jarn\'ik-Besicovitch theorem states that the Hausdorff dimension of this set is $\frac{2}{\tau+1}$ for $\tau>1$. It is natural, then, to ask about the Hausdorff dimension of $W(\tau)$ on subspaces of Euclidean space. There are a constellation of results regarding this question, known as the Generalized Baker-Schmidt Problem, a good summary of which can be found in the introduction of \cite{GBSP}. Choosing $\psi(q)=q^{-\tau}$ in Theorem \ref{main jarnik}, we obtain a lower bound on $\dim W(\tau) \cap \Ln$. An upper bound on $\dim W(\tau) \cap \Ln$ can be obtained from Theorem 2 in \cite{huang2018}, which together immediately gives the following corollary.

\begin{corollary} Let $\mathcal{L}=\{(\vec{x},\vec{x}\mathfrak{a}+\vec{a}_0) \in \R^d : \vec{x} \in \R^{n}\}$ be an $n$-dimensional affine subspace of $\R^d$ with  $\mathfrak{a}\in \textup{Mat}_{n,d-n}$ and $\vec{a}_0 \in \R^{d-n}$ such that $\omega(\mathfrak{a})<\frac{n}{\tau}$ for some $\tau \geq \frac{1}{d}.$ Then
$$\dim W(\tau) \cap \Ln = n-\dfrac{\tau d-1}{\tau+1}.$$
\end{corollary}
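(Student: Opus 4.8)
The plan is to derive the corollary by sandwiching $\dim W(\tau)\cap\Ln$ between a lower bound coming from Theorem \ref{main jarnik} and an upper bound imported from \cite[Theorem 2]{huang2018}. First I would fix $\tau\geq 1/d$ and set $\psi(q)=q^{-\tau}$, so that $W(\psi)=W(\tau)$. The key computation is to identify, for a given candidate dimension $s\in[0,n]$, when the series $\sum_{q=1}^\infty \psi(q)^{d-n+s}q^{n-s}=\sum_{q=1}^\infty q^{-\tau(d-n+s)+n-s}$ diverges; this happens exactly when $-\tau(d-n+s)+n-s\geq -1$, i.e. when $s\leq n-\frac{\tau d-1}{\tau+1}$. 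Write $s_0:=n-\frac{\tau d-1}{\tau+1}$ for this threshold value. The condition $\tau\geq 1/d$ guarantees $s_0\leq n$, and one checks $s_0\geq 0$ as well (this uses $\tau\geq 1/d$ together with $d-n\geq 0$, or more precisely that the numerator $\tau d-1\leq (\tau+1)n$ whenever $n\geq 1$ and $d\geq n$, which holds since $\tau d - 1 \le \tau d \le \tau d + n\tau + n - \tau d = n(\tau+1)$ fails to be tight but suffices; I would verify this inequality carefully).

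Next I would check that the hypothesis $\omega(\mathfrak{a})<n/\tau$ is precisely what is needed to apply Theorem \ref{main jarnik} at the critical exponent $s_0$. Substituting $s=s_0$ into the bound $\omega(\mathfrak{a})<\frac{n(d-n+s)}{n+1-s}$ from that theorem and simplifying, the right-hand side becomes exactly $n/\tau$; so for every $s<s_0$ (and hence a fortiori for $s$ slightly below $s_0$) the hypothesis $\omega(\mathfrak{a})<n/\tau$ implies $\omega(\mathfrak{a})<\frac{n(d-n+s)}{n+1-s}$, since that expression is increasing in $s$. Then Theorem \ref{main jarnik} gives $\mathcal{H}^s(W(\tau)\cap\Ln)=\mathcal{H}^s(\Ln)$; for $s<s_0\leq n$ this is $\infty$ (as noted in the remark after Theorem \ref{main jarnik}), and in particular positive, so $\dim W(\tau)\cap\Ln\geq s$ for all $s<s_0$, yielding $\dim W(\tau)\cap\Ln\geq s_0$.

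For the matching upper bound, I would invoke \cite[Theorem 2]{huang2018}: under the stated Diophantine restriction on $\mathfrak{a}$ (which the excerpt tells us is the convergence counterpart and is implied by $\omega(\mathfrak{a})<n/\tau$), the Hausdorff measure $\mathcal{H}^s(W(\psi)\cap\Ln)=0$ whenever the series $\sum_q \psi(q)^{d-n+s}q^{n-s}$ converges, i.e. whenever $s>s_0$. Hence $\dim W(\tau)\cap\Ln\leq s_0$. Combining the two bounds gives $\dim W(\tau)\cap\Ln=s_0=n-\frac{\tau d-1}{\tau+1}$, as claimed.

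The main obstacle is bookkeeping rather than conceptual: one must be careful that the Diophantine hypothesis $\omega(\mathfrak{a})<n/\tau$ simultaneously fits the hypothesis of Theorem \ref{main jarnik} for all $s<s_0$ and the hypothesis of \cite[Theorem 2]{huang2018} for all $s>s_0$, and that the edge case $s=s_0$ (where the series may diverge or converge depending on whether the exponent equals $-1$) does not cause trouble — it does not, since dimension is a supremum/infimum over open ranges of $s$. A secondary point requiring care is confirming $0\leq s_0\leq n$ so that the conclusion is a genuine dimension; this is exactly where the constraint $\tau\geq 1/d$ enters.
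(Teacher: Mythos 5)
Your overall strategy coincides with the paper's: take $\psi(q)=q^{-\tau}$, get the lower bound on $\dim W(\tau)\cap\Ln$ from Theorem \ref{main jarnik} and the upper bound from \cite[Theorem 2]{huang2018}, and your identification of the critical exponent $s_0=n-\frac{\tau d-1}{\tau+1}$ together with the check that $\frac{n(d-n+s_0)}{n+1-s_0}=\frac{n}{\tau}$ are both correct. However, your justification of the lower bound contains a directional error. The function $s\mapsto\frac{n(d-n+s)}{n+1-s}$ is indeed increasing on $[0,n]$, but that means for $s<s_0$ its value is \emph{smaller} than $n/\tau$, so the Diophantine hypothesis of Theorem \ref{main jarnik} at level $s$ is \emph{stronger} than $\omega(\mathfrak{a})<n/\tau$, not weaker. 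The implication you assert --- that $\omega(\mathfrak{a})<n/\tau$ gives $\omega(\mathfrak{a})<\frac{n(d-n+s)}{n+1-s}$ for every $s<s_0$ ``since that expression is increasing in $s$'' --- uses the monotonicity in exactly the wrong direction and is false in general. (For the upper bound the monotonicity does work in your favor, since there you need $s>s_0$ and the hypothesis only gets weaker.)

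The repair is immediate and is the natural reading of the paper's one-line derivation: apply Theorem \ref{main jarnik} at $s=s_0$ itself. At $s=s_0$ the hypothesis is exactly $\omega(\mathfrak{a})<n/\tau$, and the series $\sum_q q^{\,n-s_0-\tau(d-n+s_0)}=\sum_q q^{-1}$ still diverges (the exponent is exactly $-1$), so $\mathcal{H}^{s_0}(W(\tau)\cap\Ln)=\mathcal{H}^{s_0}(\Ln)>0$ and hence $\dim W(\tau)\cap\Ln\geq s_0$; no approximation from below is needed, and in particular the ``edge case'' $s=s_0$ is not something to avoid but the one value at which the argument actually closes. (Alternatively, a continuity argument works: since $\omega(\mathfrak{a})<n/\tau$ strictly and $\frac{n(d-n+s)}{n+1-s}\to n/\tau$ as $s\to s_0^-$, the hypothesis of Theorem \ref{main jarnik} holds for all $s$ in some interval $(s_0-\delta,s_0)$ --- but that is continuity, not the monotonicity you invoke.) A smaller point: your verification that $s_0\geq 0$ does not go through as written; $s_0\geq 0$ is equivalent to $\tau(d-n)\leq n+1$, which does not follow from $\tau\geq 1/d$ alone, but it does hold whenever the hypothesis $\omega(\mathfrak{a})<n/\tau$ is satisfiable, since a Dirichlet/Minkowski argument shows $\omega(\mathfrak{a})\geq d-n$ for every $\mathfrak{a}\in\textup{Mat}_{n,d-n}$, whence $\tau(d-n)<n$.
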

It is worth noting that the upper bound can also be established with the less restrictive condition that $\omega(\tilde{\mathfrak{a}})< \frac{n+1}{\tau}$, and is found as Corollary 4 in \cite{huang2018}. The upper bound used above is derived from Theorem 2 in \cite{huang2018}, which in fact gives a condition for an affine subspace to be of strong Jarn\'ik type for convergence. The author would like to thank Jing-Jing Huang for drawing his attention to this aspect of the problem.

\section{Indexing of rational points}\label{index}

Since this problem is concerned with the extrinsic Diophantine approximation of subspaces, it is important to find a way of identifying those rational points $\vec{m}/q \in \Q^d$ which are suitably close to the subspace in question.. Additionally, it will be useful to focus attention on segments of $\Ln$ which intersect with a strip of $\R^d.$

For $\vec{v} \in \Z^n$ define $\mathcal{S}_\vec{v}=\prod_{i=1}^n[v_i,v_i+1]^n \times \R^{d-n}$. Note that then $\mathcal{S}_{\vec{0}} = [0,1]^n \times \R^{d-n}$, and 
$$\bigcup_{\vec{v} \in \Z^n} \mathcal{S}_{\vec{v}}= \R^d.$$

 We will consider the problem of showing that $\mathcal{H}^s\left(W(\psi) \cap \left(\Ln \cap \mathcal{S}_{\vec{0}}\right)\right)=\mathcal{H}^s(\Ln\cap \mathcal{S}_\vec{0})$.  Doing so will allow us to consider the problem on a compact metric space, $[0,1]^n$, which is necessary to use the tools of ubiquity theory. This focus on the unit $n$-cube will be justified in the proof of Theorem 1.

For each $q \in \N$, consider those rational points $\vec{m}/q \in \Q^d$ which are in a suitable neighborhood of $\mathcal{L}_{\tilde{\mathfrak{a}}}^n$. In other words, rational points such that
$$\left|\vec{y}-\dfrac{\vec{m}}{q}\right|<\dfrac{\psi(q)}{q}$$
for some $\vec{y} \in \mathcal{L}_{\tilde{\mathfrak{a}}}^n$ where $| \cdot |$ denotes the sup norm. Put differently, there exists a rational point $\vec{m}/q \in \Q^d$ $\psi$-close to some $\vec{x}\in \Ln$ if
$$\lVert q\vec{y}\rVert<\psi(q).$$

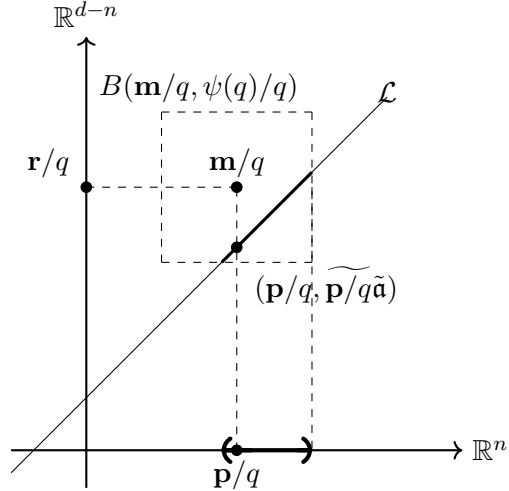
\begin{figure}[h]
\begin{center}
\begin{tikzpicture}[scale=1]
\draw [black] (-1,-.8) -- (0,0.2) -- (4,4.2);
\node [above] at (4,4) {$\Ln$};
\draw [dashed] (2,-.5) -- (2,3);
\node [below] at (2,-.5) {$\vec{p}/q$};
\filldraw [black] (2,2.2) circle (2pt);
\node [right] at (2.1,1.7) {$(\vec{p}/q,\widetilde{\vec{p}/q}\tilde{\mathfrak{a}})$};
\filldraw [black] (2,3) circle (2pt);
\node [above] at (2,3) {$\vec{m}/q$};
\draw [dashed] (0,3) -- (2,3);
\filldraw [black] (0,3) circle (2pt);
\filldraw [black] (2,-.5) circle (2pt);
\node [above] at (-.5,3) {$\vec{r}/q$};
\node [above] at (1.5,4) {$B(\vec{m}/q,\psi(q)/q)$};
\draw [dashed] (1,4) -- (1,2);
\draw [dashed] (3,4) -- (3,2);
\draw [dashed] (1,2) -- (3,2);
\draw [dashed] (1,4) -- (3,4);
\draw [very thick] (1.8,2) -- (3,3.2);
\draw [dashed] (3,3) -- (3,-.5);
\draw [ultra thick] (1.8,-.5)--(3,-.5);
\draw [ultra thick, -)] (2.8,-.5)--(3,-.5);
\draw [ultra thick, (-] (1.8,-.5)--(2.2,-.5);
\draw [thick, ->] (-1,-.5) -- (0,-.5) -- (5,-.5);
\draw [thick, ->] (0,-1) -- (0,0) -- (0,5);
\node [above] at (0,5) {$\R^{d-n}$};
\node [right] at (5,-.5) {$\R^n$};
\end{tikzpicture}
\caption{Possible off-center neighborhoods}
\end{center}
\end{figure}

To index these points, consider a rational point $\vec{p}/q \in \Q^{n} \cap [0,1]^n$ and the point $(\vec{p}/q,\widetilde{\vec{p}/q}\tilde{\mathfrak{a}})$ on $\mathcal{L}_{\tilde{\mathfrak{a}}}^n$.  If there exists an $\vec{r}/q \in \Q^{d-n}$ such that 
$$\left|\widetilde{\vec{p}/q}\tilde{\mathfrak{a}}-\dfrac{\vec{r}}{q}\right|< \dfrac{\psi(q)}{q}$$
then the above inequality can be rewritten as

\begin{equation}
\lVert\hat{\vec{p}} \tilde{\mathfrak{a}} \rVert< \psi \left( \left| \hat{\vec{p}}\right| \right)
\end{equation}

where $\hat{\vec{p}}=(q,\vec{p}) \in \N^{n+1}$ noting that since $\vec{p}/q \in [0,1]^n$, $|\hat{\vec{p}}|=q$. This inequality is a nice shorthand way of saying that the point $\vec{m}/q=(\vec{p}/q,\vec{r}/q) \in \Q^d$ is $\psi$-close to $\Ln$ if the corresponding vector $\hat{\vec{p}} \in \N^{n+1}$ satisfies the above inequality.

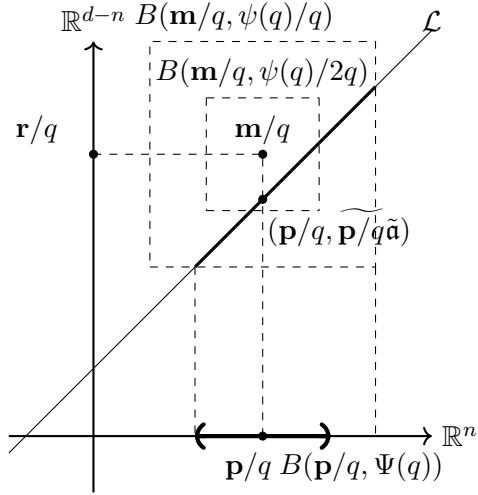
\begin{figure}[h]
\begin{center}
\begin{tikzpicture}[scale=.75]
\draw [black] (-2.5,-2.3) -- (0,.2) -- (5,5.2);
\node [above] at (5,5) {$\Ln$};
\draw [dashed] (2,-2) -- (2,3);
\filldraw [black] (2,-2) circle (2pt);
\node [below] at (1.75,-2.1) {$\vec{p}/q$};
\node [below] at (3.75,-2.1){$B(\vec{p}/q, \Psi(q))$};
\filldraw [black] (2,2.2) circle (2pt);
\node [right] at (1.9,1.7) {$(\vec{p}/q,\widetilde{\vec{p}/q}\tilde{\mathfrak{a}})$};\filldraw [black] (2,3) circle (2pt);
\node [above] at (2,3) {$\vec{m}/q$};
\draw [dashed] (-1,3) -- (2,3);
\filldraw [black] (-1,3) circle (2pt);
\node [above] at (-2,3) {$\vec{r}/q$};
\node [above] at (2,4) {$B(\vec{m}/q,\psi(q)/2q)$};
\node [above] at (1.5,5) {$B(\vec{m}/q,\psi(q)/q)$};
\draw [dashed] (1,4) -- (1,2);
\draw [dashed] (3,4) -- (3,2);
\draw [dashed] (1,2) -- (3,2);
\draw [dashed] (1,4) -- (3,4);
\draw [very thick] (.8,1) -- (4,4.2);
\draw [dashed] (4,1) -- (4,-2);
\draw [dashed] (.8,1) -- (.8,-2);
\draw [ultra thick] (.8,-2)--(3.2,-2);

\draw [thick, ->] (-2.5,-2) -- (0,-2) -- (5,-2);
\draw [thick, ->] (-1,-3) -- (-1,0) -- (-1,5);
\node [above] at (-1,5) {$\R^{d-n}$};
\node [right] at (5,-2) {$\R^n$};
\draw [dashed, shift={(2,3)}] (-2,-2) rectangle (2,2);
\draw[-), ultra thick] (3,-2) -- (3.2,-2);
\draw[(-, ultra thick] (.8,-2) -- (1,-2);

\end{tikzpicture}
\end{center}
\caption{Uniform neighborhoods}
\end{figure}

The goal is to reduce this problem to a ubiquitous system on $\R^n$ by considering a limsup set centered around these $\vec{p}/q$ which correspond to a sufficiently close $\vec{m}/q$. We would like to construct balls centered at the $\vec{p}/q \in \Q^n$ such that if $\vec{x} \in \R^n$ is inside the ball centered at $\vec{p}/q$, then $(\vec{x}, \tilde{\vec{x}}\tilde{\mathfrak{a}})$ is within $\psi(q)/q$ of the corresponding $\vec{m}/q$. However, with the current setup, there will not be balls centered at $\vec{p}/q\in \Q^n,$ but rather neighborhoods of varying sizes depending on the distance $\vec{m}/q$ is from $\Ln$, some of which may be to only one side of the rational point, as in Figure 1. 

To address this issue, consider a smaller class of rational points. Specifically only those $\vec{p}/q \in \Q^n \cap [0,1]^n$ such that
$$\lVert \hat{\vec{p}}\tilde{\mathfrak{a}} \rVert < \dfrac{\psi\left(|\hat{\vec{p}}|\right)}{2}.$$ Now, simply considering all $\vec{x} \in \R^n$ such that $(\vec{x},\tilde{\vec{x}}\tilde{\mathfrak{a}}) \in B(\vec{m}/q,\psi(q)/2q)$ would give a neighborhood which is not quite centered on $\vec{p}/q$. In order to rectify this, consider the ball centered at $\vec{p}/q$ of radius $\Psi(q)$ where
$$\Psi(q)=\dfrac{c\psi(q)}{q}$$
with $c=\left(2n|\mathfrak{a}|\right)^{-1}$, as in Figure 2.

Choosing this neighborhood allows us to more easily calculate the measure of the balls at different levels while being more conservative in determining which points are considered to be ``close enough."

\section{Counting problem}
In the course of the argument, we will need to establish an upper bound on the number of integer vectors $\vec{\hat{p}} \in \N^{n+1}$ defined above. Namely, on the size of the set of vectors
\begin{equation}\label{req}
 \left| \left\lbrace \hat{\vec{p}} \in \N^{n+1}: \lVert\hat{\vec{p}} \tilde{\mathfrak{a}} \rVert<\frac{\psi\left(k^{t}\right)}{2}, |\hat{\vec{p}}|\leq k^{t-1}, \vec{p}/q \in B\right\rbrace \right|  
\end{equation}
where $B=B(\vec{x}_0, \eta)$ so that $m(B)=(2\eta)^n$. Fix $q \in \N$ and define 
\begin{equation} \label{Pqdxe}
\mathcal{P}(q, \delta,\vec{x}_0, \eta)=|\{\vec{p} \in \N^n : \lVert \hat{\vec{p}}\tilde{\mathfrak{a}} \rVert< \delta, |\vec{p}-q \vec{x}_0| < q\eta \}|.
\end{equation}
In order to find a bound for (\ref{req}), we establish a bound on $\mathcal{P}(q,\delta,\vec{x}_0,\eta)$, for which the following theorem from \cite{huang2018} is needed. 
\begin{theorem}{\textup{\cite[Theorem 5]{huang2018}}} \label{jing}
Let $\mathfrak{a} \in \textbf{\textup{MAD}}(l,m,\omega).$ Then for all positive integers $J \geq 2$, we have 
\begin{equation}
\sum_{\substack{j \in \Z^m / \{\vec{0} \} \\ |\vec{j}| \leq J}}\prod_{u=1}^l \lVert \vec{j} \cdot \textup{row}_u(\mathfrak{a})\rVert^{-1} \ll J^{\omega}(\log J)^l 
\end{equation}
where the implied constant depends only on $\mathfrak{a}.$
\end{theorem}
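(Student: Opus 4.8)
The plan is to prove the estimate by a dyadic decomposition of each factor $\lVert\vec{j}\cdot\textup{row}_u(\mathfrak{a})\rVert^{-1}$ into level sets, and to control the sizes of those level sets by a separation estimate that feeds the hypothesis $\mathfrak{a}\in\textbf{MAD}(l,m,\omega)$ back into itself through \emph{differences} of the vectors $\vec{j}$. Write $\kappa=\kappa(\mathfrak{a}):=\inf_{\vec{j}\in\Z^m\setminus\{\vec{0}\}}|\vec{j}|^{\omega}\prod_{u=1}^{l}\lVert\vec{j}\cdot\textup{row}_u(\mathfrak{a})\rVert>0$, so that $\prod_{u=1}^l\lVert\vec{j}\cdot\textup{row}_u(\mathfrak{a})\rVert\geq\kappa|\vec{j}|^{-\omega}$ for every $\vec{j}\neq\vec{0}$; in particular every factor is strictly positive, so the left-hand side is a finite sum of finite terms. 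The elementary bound $\lVert\theta\rVert^{-1}\asymp\sum_{t\geq 1}2^{t}\,\mathbf{1}[\lVert\theta\rVert<2^{-t+1}]$, valid for $0<\lVert\theta\rVert\leq\tfrac12$, multiplied over $u=1,\dots,l$ gives $\prod_{u}\lVert\vec{j}\cdot\textup{row}_u(\mathfrak{a})\rVert^{-1}\asymp\sum_{\vec{t}\in\Z_{\geq 1}^{l}}2^{|\vec{t}|_1}\,\mathbf{1}[\vec{j}\in S(\vec{t})]$, where $|\vec{t}|_1=t_1+\cdots+t_l$ and $S(\vec{t}):=\{\vec{j}\in\Z^m:\lVert\vec{j}\cdot\textup{row}_u(\mathfrak{a})\rVert<2^{-t_u+1}\text{ for }u=1,\dots,l\}$. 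Summing over $\vec{0}\neq\vec{j}$ with $|\vec{j}|\leq J$, the theorem is reduced to
\[
\sum_{\vec{t}\in\Z_{\geq 1}^{l}}2^{|\vec{t}|_1}\,\#\bigl(S(\vec{t})\cap(\Z^m\setminus\{\vec{0}\})\cap[-J,J]^m\bigr)\ll_{\mathfrak{a}}J^{\omega}(\log J)^{l}.
\]

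I would then establish three facts. \textbf{(i) Cut-off.} If $\vec{j}\in S(\vec{t})$ with $0<|\vec{j}|\leq J$ then $\kappa J^{-\omega}\leq\prod_u\lVert\vec{j}\cdot\textup{row}_u(\mathfrak{a})\rVert<2^{l}\,2^{-|\vec{t}|_1}$, so the set above is empty unless $|\vec{t}|_1\leq T:=\omega\log_2 J+O_{\mathfrak{a}}(1)$ (and then $2^{T}\asymp_{\mathfrak{a}}J^{\omega}$); hence only $O((\log J)^{l})$ tuples $\vec{t}$ contribute. \textbf{(ii) The inequality $\omega\geq m$.} By Minkowski's linear forms theorem, for every $N\geq 1$ there is $\vec{k}\neq\vec{0}$ with $|\vec{k}|\leq N$ and $\prod_u\lVert\vec{k}\cdot\textup{row}_u(\mathfrak{a})\rVert\leq N^{-m}$; then $|\vec{k}|^{\omega}\prod_u\lVert\vec{k}\cdot\textup{row}_u(\mathfrak{a})\rVert\leq N^{\omega-m}$, which cannot stay $\geq\kappa>0$ as $N\to\infty$ unless $\omega\geq m$. \textbf{(iii) Separation.} If $\vec{j},\vec{j}'$ are distinct points of $S(\vec{t})\cap[-J,J]^m$, then $\vec{k}:=\vec{j}-\vec{j}'\neq\vec{0}$ satisfies $\lVert\vec{k}\cdot\textup{row}_u(\mathfrak{a})\rVert<2^{-t_u+2}$ for all $u$, so $\kappa|\vec{k}|^{-\omega}\leq\prod_u\lVert\vec{k}\cdot\textup{row}_u(\mathfrak{a})\rVert<2^{2l}\,2^{-|\vec{t}|_1}$, giving $|\vec{k}|\gg_{\mathfrak{a}}2^{|\vec{t}|_1/\omega}=:\rho$. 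Thus the points of $S(\vec{t})\cap[-J,J]^m$ are pairwise $\gg_{\mathfrak{a}}\rho$-separated in the sup norm, and packing disjoint cubes of side $\asymp\rho$ into a box of side $\asymp J$ yields $\#(S(\vec{t})\cap(\Z^m\setminus\{\vec{0}\})\cap[-J,J]^m)\ll_{\mathfrak{a}}(J/\rho)^m+1\asymp_{\mathfrak{a}}J^{m}2^{-m|\vec{t}|_1/\omega}+1$.

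Assembling these: for each contributing $\vec{t}$ (that is, $|\vec{t}|_1\leq T$),
\[
2^{|\vec{t}|_1}\,\#\bigl(S(\vec{t})\cap(\Z^m\setminus\{\vec{0}\})\cap[-J,J]^m\bigr)\ll_{\mathfrak{a}}J^{m}\,2^{|\vec{t}|_1(1-m/\omega)}+2^{|\vec{t}|_1}.
\]
Since $\omega\geq m$ by (ii) the exponent $1-m/\omega$ is nonnegative, so both terms on the right are maximal at $|\vec{t}|_1=T$, where they are $\ll_{\mathfrak{a}}J^{m}(J^{\omega})^{1-m/\omega}+J^{\omega}=J^{\omega}$. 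Summing over the $O((\log J)^{l})$ tuples allowed by (i) gives the bound $\ll_{\mathfrak{a}}J^{\omega}(\log J)^{l}$, with implied constant depending only on $\mathfrak{a}$ (through $\kappa$, $l$, $m$, $\omega$). When $\omega>m$ the sum over $|\vec{t}|_1$ is geometric and one even obtains $(\log J)^{l-1}$; the borderline $\omega=m$ is where the full power $(\log J)^{l}$ is used.

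The heart of the matter is step (iii): passing to differences converts ``$\vec{j}$ lies in a thin box around the lattice generated by the rows of $\mathfrak{a}$'' into ``$\vec{j}-\vec{j}'$ lies in a comparably thin box around $\vec{0}$'', so the $\textbf{MAD}$ hypothesis — which literally bounds $\prod_u\lVert\cdot\rVert$ only for a \emph{single} vector — is forced to make the near-minimizing vectors spread out rather than merely rare, which is exactly the gain over the trivial bound $\#(S(\vec{t})\cap\cdots)\leq(2J+1)^m$. The only other genuine (if standard) input is the inequality $\omega\geq m$ of step (ii), without which the exponents in the final assembly would not balance; everything else is routine bookkeeping with the dyadic decomposition and the cut-off $|\vec{t}|_1\leq T$.
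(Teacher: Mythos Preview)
The paper does not prove this theorem; it is quoted from \cite{huang2018} without argument and used as a black box in the proof of Lemma~\ref{biglem}. So there is no ``paper's own proof'' to compare against.

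That said, your argument is correct and self-contained. The strategy --- dyadically decomposing each factor $\lVert\vec{j}\cdot\textup{row}_u(\mathfrak{a})\rVert^{-1}$ into level sets $S(\vec{t})$, truncating $|\vec{t}|_1\leq T\asymp\omega\log_2 J$ via the $\textbf{MAD}$ lower bound, and then bounding $\#(S(\vec{t})\cap[-J,J]^m)$ by a packing argument after applying the $\textbf{MAD}$ hypothesis to \emph{differences} $\vec{j}-\vec{j}'$ --- is the standard route to reciprocal-sum estimates of this type, and each step checks out. The auxiliary observation $\omega\geq m$ (via Minkowski/pigeonhole) is exactly what makes the exponent $1-m/\omega$ nonnegative so that the geometric sums are dominated by their top term; your cruder ``maximum times $O((\log J)^l)$ tuples'' bookkeeping already gives $J^{\omega}(\log J)^l$, and your closing remark that one saves a logarithm when $\omega>m$ is also correct.
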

The proof will also make use of a class of very useful trigonometric polynomials called the Selberg functions. A more detailed explanation of these functions can be found in \cite{montgomery1994ten}. Throughout this section let $e(y)=\exp(2 \pi i y).$
\begin{definition}Let $\Delta=(-\delta,\delta)\in \mathbb{T}$ and let $\chi_{\Delta}$ be its characteristic function. Then for any $J \geq 1$ there exist finite trigonometric polynomials of degree at most $J$
\begin{equation*}
S_J^{\pm}(y)=\sum_{|j| \leq J} b_j^{\pm}e(jy)
\end{equation*}
with
\begin{equation*}
|b_j^{\pm}| \leq \dfrac{1}{J+1}+\min \left(2 \delta, \dfrac{1}{\pi|j|}
\right)\end{equation*}
and
\begin{equation*}
b_0^{\pm}=2\delta \pm \dfrac{1}{J+1}
\end{equation*}
such that 
\begin{equation*}S_J^-(y) \leq \chi_{\Delta}(y) \leq S_J^+(y) \quad \forall y \in \mathbb{T}
\end{equation*}
These polynomials are called the Selberg functions.
\end{definition}
The following lemma will provide a bound on $\mathcal{P}(q, \delta,\vec{x}_0, \eta)$ when a certain Diophantine condition is imposed on the parametrizing matrix $\mathfrak{a}.$ The proof of this lemma follows closely the steps of the proof of  \cite[Theorem 7]{huang2018}.

\begin{lemma}\label{biglem}
If $q < \frac{1}{ 2\eta}$, then
\begin{equation}
\mathcal{P}(q,\delta,\vec{x}_0,\eta) \leq 1.
\end{equation}
Additionally, for $\mathfrak{a} \in \textbf{\textup{MAD}}(n,d-n,\omega)$ and $q \geq \frac{1}{2\eta},$ 
\begin{equation}
\mathcal{P}(q,\delta,\vec{x}_0,\eta) < 3^d\delta^{d-n}q^nm(B)+C_{\mathfrak{a}}\delta^{d-n-\omega}\log \left(\dfrac{1}{\delta}-1\right)^n.
\end{equation}
\end{lemma}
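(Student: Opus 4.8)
The first assertion is immediate: if $q < \frac{1}{2\eta}$, then $q\eta < \frac{1}{2}$, so the condition $|\vec{p} - q\vec{x}_0| < q\eta$ confines $\vec{p}$ to an open cube of side less than $1$ in $\R^n$, which contains at most one integer point. Hence $\mathcal{P}(q,\delta,\vec{x}_0,\eta) \leq 1$.

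For the main estimate with $q \geq \frac{1}{2\eta}$, the plan is to count $\vec{p} \in \N^n$ satisfying the $d-n$ congruence-type conditions $\lVert \hat{\vec{p}}\tilde{\mathfrak{a}} \rVert < \delta$ (one for each column of $\tilde{\mathfrak{a}}$) together with the box constraint $|\vec{p} - q\vec{x}_0| < q\eta$, using the Selberg functions to majorize the indicator of the interval $\Delta = (-\delta,\delta)$ in each of the $d-n$ coordinates and to smooth the box constraint. Writing $u$-th column of $\tilde{\mathfrak{a}}$ as $\tilde{\vec{a}}^{(u)}$, one bounds $\mathcal{P}$ above by $\sum_{\vec{p}} \prod_{u=1}^{d-n} S_J^+\!\left(\hat{\vec{p}}\cdot\tilde{\vec{a}}^{(u)}\right) \cdot (\text{smoothed indicator of the box})$. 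Expanding each $S_J^+$ into its Fourier series $\sum_{|j_u|\le J} b_{j_u}^+ e(j_u \hat{\vec{p}}\cdot\tilde{\vec{a}}^{(u)})$ and interchanging sums, the inner sum over $\vec{p}$ in the box becomes a geometric-type sum whose size is controlled by how close the linear form $\sum_u j_u \cdot \text{row}_i(\mathfrak{a})$ (in the variables $p_1,\dots,p_n$) is to an integer — here $\text{row}_i$ runs over the rows of $\mathfrak{a}$, the tilt part. The diagonal term $\vec{j} = \vec{0}$ contributes the main term $b_0^{+\,(d-n)} \cdot (q\eta)^n \cdot (\text{const})$; since $b_0^+ = 2\delta + \frac{1}{J+1}$, choosing $J \asymp \delta^{-1}$ makes this $\ll \delta^{d-n} q^n \eta^n \asymp \delta^{d-n} q^n m(B)$, and a careful accounting of the absolute constants yields the factor $3^d$. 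For the off-diagonal terms one applies Theorem \ref{jing} with $l = n$, $m = d-n$, and the given $\omega$: after bounding $|b_{j_u}^+| \ll \min(\delta, 1/|j_u|) + 1/J$ and the box sum by $\prod_i \min(q\eta, \lVert \sum_u j_u\,\text{row}_i(\mathfrak{a})\rVert^{-1})$, summing over $\vec{j}$ with $|\vec{j}|\le J$ produces $\ll \delta^{d-n} \cdot J^{\omega}(\log J)^n \asymp \delta^{d-n-\omega}(\log\frac{1}{\delta})^n$, matching the claimed $C_{\mathfrak{a}}\delta^{d-n-\omega}\log(\frac{1}{\delta}-1)^n$ after tracking the exact relationship between $J$ and $\delta$ (the $\frac{1}{\delta}-1$ coming from the precise choice $J+1 = \lfloor 1/\delta \rfloor$ or similar).

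The main obstacle is the bookkeeping in the off-diagonal estimate: one must correctly organize the $(d-n)$-fold product of Fourier expansions so that the resulting multi-dimensional sum over $\vec{j} \in \Z^{d-n}$ factors through the $n$ linear forms $\vec{j}\cdot\text{row}_i(\mathfrak{a})$ in exactly the shape required by Theorem \ref{jing}, and then verify that the contribution of the box sum (a product of $\min$'s over the $n$ rows) does not destroy the $J^\omega(\log J)^n$ bound. A secondary subtlety is handling the terms where some but not all $j_u$ vanish, and ensuring the smoothing of the box constraint $|\vec{p}-q\vec{x}_0|<q\eta$ (itself done via Selberg or Fejér kernels in $n$ variables) contributes only harmless constant and logarithmic factors; since $q \geq \frac{1}{2\eta}$ guarantees $q\eta \geq \frac{1}{2}$, the box is wide enough that this smoothing is lossless up to constants. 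The rest is the routine optimization $J \asymp \delta^{-1}$ already present in the proof of \cite[Theorem 7]{huang2018}, which this lemma follows closely.
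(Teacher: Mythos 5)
Your proposal matches the paper's argument in all essentials: the trivial pigeonhole for $q<\frac{1}{2\eta}$, the Selberg majorant $S_J^+$ applied in each of the $d-n$ normal coordinates, extraction of the $\vec{j}=\vec{0}$ main term, the geometric-sum bound $\prod_{u=1}^n\lVert\vec{j}\cdot\text{row}_u(\mathfrak{a})\rVert^{-1}$ for the box sum, Theorem \ref{jing} for the off-diagonal terms, and the choice $J=\frac{1}{\delta}-1$. The only deviation is your suggestion to additionally smooth the box constraint with Selberg/Fej\'er kernels, which the paper avoids by summing the exponentials over the box directly; this is cosmetic and does not change the proof.
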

\begin{proof} Note that if $q< \frac{1}{2\eta}$ and there exists one $\vec{p}/q \in B$, then the distance to the nearest possible rational point is greater than the diameter of the ball. Thus, there can be at most one $\vec{p} \in \N^n$ satisfying the required conditions, so $\mathcal{P}(q, \delta, \vec{x}_0, \eta)\leq 1$ if $q < \frac{1}{2 \eta}.$ Therefore, restrict the focus to those $q \geq \frac{1}{2 \eta}.$
Notice that
\begin{align*}
\mathcal{P}(q,\delta,\vec{x}_0,\eta) &=\sum_{\vec{p} \in \Z^n \cap B(q\vec{x}_0,q\eta)} \prod_{v=1}^{d-n} \chi_{\Delta}(\hat{\vec{p}} \cdot \text{col}_v (\tilde{\mathfrak{a}})) \\
\intertext{and bounding the characteristic function above by $S_J^+$ this is} 
&<  \sum_{\vec{p} \in \Z^n \cap B(q\vec{x}_0,q\eta)} \prod_{v=1}^{d-n}S_J^+(\hat{\vec{p}} \cdot 
\text{col}_v (\tilde{\mathfrak{a}}))\\
\intertext{which can be rewritten using the properties of the Selberg functions as}
&=\sum_{\vec{p} \in \Z^n \cap B(q\vec{x}_0,q\eta)} \prod_{v=1}^{d-n}\sum_{|j_v| \leq J} b_{j_v}^+e(j_v\hat{\vec{p}} \cdot 
\text{col}_v (\tilde{\mathfrak{a}})). \\
\intertext{Reindexing, this is}
&=\sum_{\vec{p} \in \Z^n \cap B(q\vec{x}_0,q\eta)} \sum_{\substack{\vec{j} \in \Z^{d-n} \\|\vec{j}| \leq J|}} \left( \prod_{v=1}^{d-n}b^+_{j_v} \right) e\left( \sum_{v=1}^{d-n} j_v \hat{\vec{p}} \cdot \text{col}_v(\tilde{\mathfrak{a}}) \right) \\
\intertext{then pulling out the $\vec{j}=0$ term, and using the inequality}
 &|e(a_0x)+ e(a_1x) + \cdots + e(a_kx)| \leq \dfrac{2}{|e(x)-1|} \leq \dfrac{1}{\lVert x\rVert} \\
 \intertext{for any sequence of consecutive integers $a_i$, this is}
&\leq (2q\eta +1)^n\left|b_0^+ \right|^{d-n}+\sum_{0 < |\vec{j}|\leq J}  \left( \prod_{v=1}^{d-n}b^+_{j_v} \right) \left|\sum_{\vec{p} \in \Z^n \cap B(q\vec{x}_0,q\eta)} e\left(\hat{\vec{p}}\cdot\tilde{\mathfrak{a}}\cdot\vec{j}^T\right)\right|  \\
&\leq \left(2\delta+\dfrac{1}{J+1}\right)^{d-n}\left(\left(2q \eta+1\right))^n+\sum_{0 < |\vec{j}|\leq J} \prod_{u=1}^n \lVert \vec{j} \cdot \text{row}_u(\mathfrak{a} )\rVert^{-1} \right).\\
\intertext{Note that this step is where the reliance on $\vec{a}_0$ disappears. Then using Theorem \ref{jing} and the bound on $q$ this is}
&\leq  \left(2\delta+\dfrac{1}{J+1}\right)^{d-n}\left(2^nq^nm(B)+C_{\mathfrak{a}}\left(J^{\omega}\left(\log J\right)^n\right)\right),\\
\intertext{ and letting $J=\dfrac{1}{\delta}-1$ this is}
&\leq  \left(3\delta\right)^{d-n}\left(2^nq^nm(B)+C_{\mathfrak{a}}\left(\left(\dfrac{1}{\delta}-1\right)^{\omega}\log \left(\dfrac{1}{\delta}-1\right)^n\right)\right)  \\
&\leq  3^d\delta^{d-n}q^nm(B)+C_{\mathfrak{a}}\delta^{d-n-\omega}\log \left(\dfrac{1}{\delta}-1\right)^n 
\end{align*}
as was claimed.
\end{proof}

\begin{theorem}\label{mad theorems} Let $\mathfrak{a} \in \textup{Mat}_{n,d-n}$ such that for $0\leq s\leq n$, $\omega(\mathfrak{a})<\frac{n(d-n+s)}{n+1-s}$, let $\psi: \R \to \R^+$ be a function such that for all $\epsilon>0$, $\psi(q)\geq q^{\frac{s-n-1}{d-n+s}-\epsilon}$  for sufficiently large $q$, and let $B$ be an arbitrary ball in $[0,1]^n.$ Then there exists some $t_0 \gg 0$ which depends on $B, \mathfrak{a},$ and $\epsilon$ such that
\begin{equation}\label{theorem}
\# \left\lbrace \hat{\vec{p}} \in \N^{n+1}: \lVert\hat{\vec{p}} \tilde{\mathfrak{a}} \rVert<\dfrac{\psi\left(k^{t}\right)}{2}, |\hat{\vec{p}}|\leq k^{t-1}, \vec{p}/q \in B\right\rbrace   <3^{d+n+2}\psi(k^t)^{d-n}k^{(t-1)(n+1)}m(B)
\end{equation}
holds for all $t>t_0.$
\end{theorem}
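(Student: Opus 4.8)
The plan is to bound the cardinality in \eqref{theorem} by summing $\mathcal{P}(q, \psi(k^t)/2, \vec{x}_0, \eta)$ over the relevant range of $q$, where $B = B(\vec{x}_0, \eta)$, and then apply Lemma \ref{biglem}. First I would note that every $\hat{\vec{p}} = (q, \vec{p})$ counted in \eqref{theorem} satisfies $|\hat{\vec{p}}| = q \le k^{t-1}$ (since $\vec{p}/q \in [0,1]^n$) and $\vec{p}/q \in B$, i.e. $|\vec{p} - q\vec{x}_0| < q\eta$, so the count is exactly $\sum_{q=1}^{k^{t-1}} \mathcal{P}(q, \delta, \vec{x}_0, \eta)$ with $\delta = \psi(k^t)/2$. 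I would split this sum at $q = \lceil 1/(2\eta)\rceil$: the small-$q$ range contributes at most $1/(2\eta) \le m(B)^{-1/n}$ by the first part of Lemma \ref{biglem}, which for $t$ large is negligible compared to the target bound $3^{d+n+2}\psi(k^t)^{d-n}k^{(t-1)(n+1)}m(B)$ because $\psi(k^t)^{d-n}k^{(t-1)(n+1)} \to \infty$ (using the lower bound $\psi(q) \ge q^{(s-n-1)/(d-n+s)-\epsilon}$, which makes $\psi(k^t)^{d-n}k^{(t-1)(n+1)} \gg k^{(t-1)(n+1) + (t)(s-n-1) - t\epsilon(d-n)}$, and for $s \le n$ the leading exponent $(n+1) + (s-n-1) = s \ge 0$, with care taken that it is strictly positive or handled via the $\epsilon$ slack — this is a routine exponent bookkeeping step).

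For the main range $q \ge 1/(2\eta)$, the second part of Lemma \ref{biglem} gives
\begin{equation*}
\mathcal{P}(q, \delta, \vec{x}_0, \eta) < 3^d \delta^{d-n} q^n m(B) + C_{\mathfrak{a}} \delta^{d-n-\omega} \log\!\left(\tfrac{1}{\delta} - 1\right)^n.
\end{equation*}
Summing over $q \le k^{t-1}$, the first term contributes at most $3^d \delta^{d-n} m(B) \sum_{q \le k^{t-1}} q^n \le 3^d \delta^{d-n} m(B) k^{(t-1)(n+1)}$, which with $\delta = \psi(k^t)/2$ yields something bounded by $3^d 2^{-(d-n)} \psi(k^t)^{d-n} k^{(t-1)(n+1)} m(B)$, comfortably inside the target. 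The second term contributes $C_{\mathfrak{a}} \delta^{d-n-\omega} (\log(1/\delta))^n \cdot k^{t-1}$ (there are at most $k^{t-1}$ values of $q$). I would substitute $\delta = \psi(k^t)/2$ and use $\psi(k^t) \ge k^{t((s-n-1)/(d-n+s) - \epsilon)}$; after taking $\omega = \omega(\mathfrak{a}) + \epsilon'$ slightly above the Diophantine exponent (so that $\mathfrak{a} \in \textbf{MAD}(n, d-n, \omega)$), the exponent of $k$ in this error term becomes a linear function of $t$ whose slope I must show is strictly less than $(n+1)(t-1)/t \cdot$-style growth — more precisely, strictly less than the exponent $(t-1)(n+1) + t\frac{(s-n-1)(d-n)}{d-n+s}$ of $k$ in the main term $\psi(k^t)^{d-n}k^{(t-1)(n+1)}$. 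The condition $\omega(\mathfrak{a}) < \frac{n(d-n+s)}{n+1-s}$ is exactly what makes this inequality of slopes hold once $\epsilon, \epsilon'$ are taken small enough; the logarithmic factor $(\log(1/\delta))^n$ is absorbed by any strictly positive gap in the exponents, and the constant $C_{\mathfrak{a}}$ (along with $m(B)^{-1}$, which I may need to divide through to match the $m(B)$ on the right) is absorbed into the choice of $t_0$.

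The main obstacle is the exponent comparison in the error term: one has to verify that the algebraic inequality relating the slope of $t \mapsto (d-n-\omega)\frac{s-n-1}{d-n+s} t + (t-1)$ (coming from $\delta^{d-n-\omega} k^{t-1}$) to the slope of the main exponent is equivalent, after clearing denominators, to $\omega < \frac{n(d-n+s)}{n+1-s}$, and then to track the $\epsilon$ and $\epsilon'$ perturbations so that strict inequality survives and $t_0$ can be chosen (depending on $B$ through $m(B)$ and $\eta$, on $\mathfrak{a}$ through $C_{\mathfrak{a}}$ and $\omega$, and on $\epsilon$) to make all lower-order and constant terms negligible, leaving the clean bound with constant $3^{d+n+2}$. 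One subtlety to handle carefully is the boundary case $s = n$, where $n+1-s = 1$ and the denominator in the hypothesis on $\psi$ and in the Diophantine condition behave fine, and the case where $\psi(k^t)^{d-n}k^{(t-1)(n+1)}$ grows only polynomially versus genuinely — but in all cases the stated lower bound on $\psi$ guarantees enough growth to absorb the $O(1)$ and $\log$ contributions.
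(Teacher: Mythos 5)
Your proposal follows essentially the same route as the paper's proof: rewriting the count as $\sum_{q\le k^{t-1}}\mathcal{P}(q,\psi(k^t)/2,\vec{x}_0,\eta)$, splitting at $q=1/(2\eta)$, applying Lemma \ref{biglem} in each range, and reducing the error term to an exponent comparison in which the condition $\omega(\mathfrak{a})<\frac{n(d-n+s)}{n+1-s}$ together with the lower bound on $\psi$ yields domination by the main term for $t>t_0$. The steps you defer as ``routine bookkeeping'' are exactly the computations carried out around inequality \eqref{error bound} in the paper, so the plan is sound and matches the original argument.
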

\begin{remark} It will be shown later that this assumption on $\psi(q)$ can be made without loss of generality.
\end{remark}
\begin{proof} 

Note that since $\omega(\mathfrak{a})<\frac{n(d-n+s)}{n+1-s},$ then $\mathfrak{a} \in \textbf{MAD}(n,d-n,\omega)$ such that $\omega(\mathfrak{a})< \omega<\frac{n(d-n+s)}{n+1-s}$. Let $B=B(\vec{x}_0, \eta)$ such that $m(B)=(2 \eta)^n$ as above. Then 
\begin{align*}
& \# \left\lbrace \hat{\vec{p}} \in \N^{n+1}: \lVert\hat{\vec{p}} \tilde{\mathfrak{a}} \rVert<\dfrac{\psi\left(k^{t}\right)}{2}, |\hat{\vec{p}}|\leq k^{t-1}, \vec{p}/q \in B\right\rbrace  & \\
& = \sum_{q=1}^{k^{t-1}} \# \left\lbrace \vec{p} \in \N^n: \lVert \hat{\vec{p}}\tilde{\mathfrak{a}}\rVert< \dfrac{\psi\left(k^{t}\right)}{2}, \vec{p}/q \in B\right\rbrace  \\
& < \sum_{q=1}^{k^{t-1}} \# \left\lbrace \vec{p} \in \N^n: \lVert \hat{\vec{p}}\tilde{\mathfrak{a}}\rVert<\dfrac{\psi\left(k^{t}\right)}{2}, |\vec{p}-q\vec{x}_0| < q\eta\right\rbrace   \\
\intertext{which, by the definition of $\mathcal{P}(q, \delta,\vec{x}_0, \eta)$ is}
& =\sum_{q=1}^{k^{t-1}} \mathcal{P}(q,\psi(k^t)/2,\vec{x}_0, \eta) \\
&=\sum_{q \leq \frac{1}{2\eta}}\mathcal{P}(q,\psi(k^t)/2,\vec{x}_0, \eta)+\sum_{\frac{1}{2\eta} <q \leq k^{t-1}}\mathcal{P}(q,\psi(k^t)/2,\vec{x}_0, \eta) \\
\intertext{ which from Lemma \ref{biglem} is}
& <\sum_{q \leq \frac{1}{2\eta}}1+\sum_{\frac{1}{2\eta} <q \leq k^{t-1}}\left(3^d2^{n-d}\psi(k^t)^{d-n}q^nm(B ) +C_{\mathfrak{a}}2^{\omega-d+n}\psi(k^t)^{d-n-\omega}\log \left(\dfrac{1}{\psi(k^t)}-1\right)^n\right)\\
&<\dfrac{1}{2\eta}+3^d2^{n-d}\psi(k^t)^{d-n}k^{(t-1)(n+1)}m(B)+C_{\mathfrak{a}}2^{\omega-d+n}k^{t-1}\psi(k^t)^{d-n-\omega}\log \left(\dfrac{1}{\psi(k^t)}-1\right)^n \\
&\overset{\dagger}{<}3^{d}2^{n-d+1}\psi(k^t)^{d-n}k^{(t-1)(n+1)}m(B)+C_{\mathfrak{a}}2^{\omega-d+n}k^{t-1}\psi(k^t)^{d-n-\omega}\log \left(\dfrac{1}{\psi(k^t)}-1\right)^n \\
& \overset{\ddagger}{<}3^{d}2^{n-d+2}\psi(k^t)^{d-n}k^{(t-1)(n+1)}m(B) \\
&<3^{d+n+2}\psi(k^t)^{d-n}k^{(t-1)(n+1)}m(B)
\end{align*}
where the daggered inequality holds for $t \gg 0$ such that $k^{t(n^2+d)/n} >3^{-d}2^{n-d} k^{n+1}m(B)^{-(n+1)/n}$, and the double daggered inequality will hold if it can be shown that
\begin{equation}\label{error bound} C_{\mathfrak{a}}2^{\omega-d+n}k^{t-1}\psi(k^t)^{d-n-\omega}\log \left(\dfrac{1}{\psi(k^t)}-1\right)^n < 3^d2^{n-d+1}\psi(k^t)^{d-n}k^{(t-1)(n+1)}m(B).
\end{equation}
Therefore, inequality (\ref{theorem}) will follow from inequality (\ref{error bound}), or equivalently, from
\begin{align*}
C_{\mathfrak{a}}3^{-d}2^{\omega-1}\psi(k^t)^{-\omega}\log \left(\dfrac{1}{\psi(k^t)}-1\right)^n &< k^{(t-1)n}m(B) \\
\intertext{ If we use the assumption that $\psi(q) \geq q^{\frac{s-n-1}{d-n+s}-\epsilon}$, it suffices to show that}
C_{\mathfrak{a}}3^{-d}2^{\omega-1}k^{-t\omega\left(\frac{n(d-n+s)}{n+1-s}\right)}\log \left(k^{\frac{nt(d-n+s)}{n+1-s}}-1\right)^n &< k^{(t-1)n}m(B) \\
\intertext{which, rewritten, is}
C_{\mathfrak{a}}3^{-d}2^{\omega-1}m(B)^{-1}\log \left(k^{\frac{nt(d-n+s)}{n+1-s}}-1\right)^n &< k^{(t-1)n-t\omega\left(\frac{n(d-n+s)}{n+1-s}\right)} \\
C_{\mathfrak{a}}3^{-d}2^{\omega-1}k^{n}m(B)^{-1}\log \left(k^{\frac{nt(d-n+s)}{n+1-s}}-1\right)^n &< k^{t\frac{n(d-n+s)+\omega(s-n-1-\epsilon)}{d-n+s}}. \\
\end{align*}
Since the left hand side grows linearly and the right hand side grows exponentially so long as $n(d-n+s)+\omega(s-n-1)-\omega\epsilon>0$, the last inequality can be satisfied by taking a $t>0$ large enough. Since $\omega<\frac{n(d-n+s)}{n+1-s}$ and we can take $\epsilon>0$ as small as we like, inequality (\ref{error bound}) holds. Since inequality (\ref{error bound}) holds, we have shown inequality (\ref{theorem}).
\end{proof}

\section{Ubiquitous systems}

Theorems \ref{main theorem} and \ref{main jarnik} are proved using ubiquitous systems. The goal is to show that the rational points in Section \ref{index} together with a function $\rho$ and a constant $k$ form a \textit{local m-ubiquitous system}. A special case of a result from ubiquity theory is then used to obtain measure statements about a limsup set related to this subset of the rational points which will be helpful to our result. For a robust guide to this theory, the reader is referred to \cite{limsup}, and for a survey treatment, \cite{survey}.

The ubiquitous systems framework aims to generalize the concept of the classical limsup set $W(\psi)$ to limsup sets centered around a collection $\mathcal{R}=\{R_{\alpha}\}_{\alpha \in J}$ of \textit{resonant points}, $R_{\alpha}$, with  a function $\beta_{\alpha}: R_{\alpha} \mapsto \R^+$, the \textit{weight} of the resonant point. In the classical setup, $\mathcal{R}=\{\vec{p}/q \in \R^d\}$ and $\beta_{\vec{p}/q}:\vec{p}/q \mapsto q.$  The following definition gives a way to quantify whether or not there are ``enough" resonant points at all levels.

\begin{definition}[Local Ubiquity] Let $\mathcal{R}=\{R_{\alpha}\}_{\alpha \in J}$ be a collection of points in a metric space $\Omega$ and let $B=B(x,r)$ be an arbitrary ball with center $x \in \Omega$ and radius $r \leq r_0$. Suppose there exists a function $\rho$, and an absolute constant $\kappa>0$ such that for $t \geq t_0$,
\begin{equation}\label{ubiq} m\left(B \cap \bigcup_{k^{t-1}< \beta_{\alpha} \leq k^t} B\left(R_{\alpha},\rho(k^t)\right)\right) \geq \kappa m(B)
\end{equation}
where $t_0$ depends on $B$. Then the triple $(\mathcal{R},\rho, k)$ is said to be a \textit{local m-ubiquitous system}.
\end{definition}
Additionally, there is a limsup set which this framework can provide measure theoretic statements about. Centered at the resonant points, define the following set.
\begin{definition} For $\Psi: \R^+ \to \R^+$, let
\begin{equation*}
\Lambda(\Psi)=\limsup_{t \to \infty} \Delta(\Psi,t)
\end{equation*}
\end{definition}
where
\begin{equation*}
\Delta(\psi,t)=\bigcup_{\substack{\alpha \in J \\ k^{t-1} <\beta_{\alpha} \leq k^t}}B\left( R_{\alpha},\Psi(\beta_{\alpha})\right)
\end{equation*}
In the case where the resonant points are the rational points $\vec{p}/q \in \Q^n$ with weight $\beta: \vec{p}/q \mapsto q$, and with approximating function $\psi(q)/q$, this set is precisely the set of $\psi$-approximable numbers, $W(\psi)$.

The following theorem, a special case of \cite[Theorem 1, Corollary 2]{limsup}, and \cite[Theorem 2, Corollary 4]{limsup} says that showing local $m$-ubiquity along with a divergence condition is sufficient to obtain a Hausdorff measure result for $\Lambda(\Psi).$

\begin{theorem}\label{ubi}
\textup{\cite[Theorem 1, Corollary 4]{limsup}} Let $\Omega$ be a compact metric space with $\dim \Omega=n$ equipped with a non-atomic probability measure such that any open subset of $\Omega$ is $m$-measurable. Suppose that $(\mathcal{R},\rho,k)$ is a local $m$-ubiquitous system and that $\Psi$ is a regular approximating function. If
$$\sum_{t=1}^{\infty} \Psi(k^t)^s\rho(k^t)^{-n}=\infty,$$
for some $0 \leq s \leq n$, then 
$$\mathcal{H}^s(\Lambda(\Psi))=\mathcal{H}^s(\Omega).$$
\end{theorem}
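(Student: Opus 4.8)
The statement is quoted from \cite{limsup}; here is a sketch of how one proves it, following the Cantor-set construction of Beresnevich, Dickinson and Velani. Since $\mathcal{H}^s(\Lambda(\Psi)) \le \mathcal{H}^s(\Omega)$ is automatic and $\Lambda(\Psi)$ is a $\limsup$ set, it is enough to show $\mathcal{H}^s(\Lambda(\Psi) \cap B_0) > 0$ for every ball $B_0 \subseteq \Omega$; a density argument (the Lebesgue density theorem when $s = n$, where $m \asymp \mathcal{H}^n$, together with the fact that $\mathcal{H}^s(\Omega) = \infty$ when $s < n$) then promotes this to $\mathcal{H}^s(\Lambda(\Psi)) = \mathcal{H}^s(\Omega)$.

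First I would use the divergence of $\sum_t \Psi(k^t)^s \rho(k^t)^{-n}$ and the regularity of $\Psi$ to pass to a rapidly growing subsequence of levels $t_1 < t_2 < \cdots$ along which the geometry at level $t_{j+1}$ is negligible relative to that at level $t_j$ (so the balls to be built genuinely nest), while the ``mass gained'' between consecutive chosen levels stays bounded below; this is where the case split of \cite{limsup}, according to whether $\rho(k^t)$ stays comparable to the relevant values of $\Psi$ or is much larger than them, is made. Next, for a ball $B$ of radius $\asymp \rho(k^{t_j})$, local $m$-ubiquity \eqref{ubiq} covers a proportion $\kappa$ of $B$ by the balls $B(R_\alpha,\rho(k^{t_j}))$ with $k^{t_j-1} < \beta_\alpha \le k^{t_j}$; a Vitali-type selection extracts a disjoint subfamily of cardinality $\gg \kappa\, m(B)\, \rho(k^{t_j})^{-n}$, and inside each chosen ball sits the concentric thin ball $B(R_\alpha,\Psi(\beta_\alpha)) \subseteq \Delta(\Psi,t_j)$, of $m$-measure $\asymp \Psi(\beta_\alpha)^n$.

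Building on this I would run the standard inductive construction: starting from $B_0$, place inside each generation-$j$ thin ball a disjoint family of generation-$(j+1)$ thin balls obtained as above at level $t_{j+1}$, and let $\mathcal{K}$ be the limit set. Every point of $\mathcal{K}$ lies in infinitely many $B(R_\alpha,\Psi(\beta_\alpha))$ with $\beta_\alpha \to \infty$, so $\mathcal{K} \subseteq \Lambda(\Psi)$. Equip $\mathcal{K}$ with the probability measure $\mu$ that distributes the mass of each ball evenly over its children; one then estimates $\mu(B(x,r))$ for $x \in \mathcal{K}$ and all $r > 0$ by locating the generation at which the $\Psi$-radii cross $r$, bounding $\mu(B(x,r))$ by the number of siblings meeting $B(x,r)$ times their individual mass, and invoking the packing counts and the mass lower bounds from the previous step to obtain $\mu(B(x,r)) \ll r^s$. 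The mass distribution principle then yields $\mathcal{H}^s(\mathcal{K}) > 0$, hence $\mathcal{H}^s(\Lambda(\Psi) \cap B_0) > 0$.

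The main obstacle is precisely this last estimate at the \emph{intermediate} scales, roughly $\Psi(\beta^{(j+1)}) \le r \le \rho(k^{t_{j+1}})$, where a single ball $B(x,r)$ can meet many generation-$(j+1)$ siblings at once; keeping $\mu(B(x,r)) \ll r^s$ there is what forces both the regularity hypothesis on $\Psi$ and a careful prescription of the growth rate of $(t_j)$, and it is the technical core of \cite{limsup}. Since those Cantor-set lemmas are available off the shelf, I would cite them directly; the only work specific to the present setting — carried out in the remainder of the paper — is to verify that the rational points of Section \ref{index}, together with the function $\rho$ and constant $k$ chosen there, do form a local $m$-ubiquitous system.
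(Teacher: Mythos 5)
Your proposal matches the paper's treatment: Theorem \ref{ubi} is not proved in the paper but is imported directly as a special case of \cite[Theorem 1, Corollary 4]{limsup}, which is exactly what you conclude by citing the Cantor-set lemmas off the shelf. Your sketch of the underlying Beresnevich--Dickinson--Velani argument (local ubiquity plus Vitali extraction, nested Cantor construction, mass distribution principle, with the intermediate-scale estimate as the technical core) is an accurate account of the cited proof, so nothing further is needed.
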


In order to prove the main proposition of this section, a lemma is required which is a simple use of the following theorem from Minkowski.

\begin{theorem}\textup{(Minkowski's theorem for systems of linear forms)}\label{Minkowski}
Let $\beta_{i,j} \in \R$, where $1 \leq i,j \leq k$ and let $C_1, \ldots, C_k >0$. If
\begin{equation}
\left|\det(\beta_{i,j})_{1 \leq i, j \leq k}\right| \leq \prod_{i=1}^k C_i,
\end{equation}
then there exists a non-zero integer point $\vec{x}=(x_1, \ldots , x_k)$ such that
\begin{align}
\left|x_1 \beta_{i,1} + \cdots + x_k \beta_{i,k} \right|&<C_i \quad (1 \leq i \leq k-1) \\
\left|x_1 \beta_{k,1} + \cdots + x_k \beta_{k,k} \right|&\leq C_k
\end{align}
\end{theorem}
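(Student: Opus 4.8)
## Plan of proof

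The plan is to use this lemma purely as a black-box consequence of Minkowski's theorem for systems of linear forms (Theorem \ref{Minkowski}) applied to a cleverly chosen $(n+1) \times (n+1)$ system. Since the final statement in the excerpt is Theorem \ref{Minkowski} itself, and its proof is classical, I would proceed as follows. First I would set up the convex symmetric body
$$
\mathcal{K} = \{ \vec{y} \in \R^k : |\beta_{i,1} y_1 + \cdots + \beta_{i,k} y_k| < C_i \ (1 \leq i \leq k-1), \ |\beta_{k,1}y_1 + \cdots + \beta_{k,k}y_k| \leq C_k \},
$$
which is the preimage of the box $\prod_{i=1}^{k-1}(-C_i,C_i) \times [-C_k,C_k]$ under the linear map $T = (\beta_{i,j})$. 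I would first dispose of the degenerate case $\det T = 0$: then $T$ has nontrivial kernel, so a rational, hence (after scaling) a nonzero integer vector lies in $\ker T \subseteq \mathcal{K}$, and we are done. So assume $\det T \neq 0$.

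The main step is the volume computation and the application of Minkowski's convex body theorem. The box has Lebesgue volume $2^k \prod_{i=1}^k C_i$ (the closed face on the last coordinate contributes nothing to volume), so $\mathrm{vol}(\mathcal{K}) = 2^k \prod_{i=1}^k C_i / |\det T| \geq 2^k$ by the hypothesis $|\det T| \leq \prod C_i$. Since $\mathcal{K}$ is convex, symmetric about the origin, and has volume $\geq 2^k$, Minkowski's first theorem guarantees a nonzero lattice point $\vec{x} \in \Z^k \cap \mathcal{K}$ — but here one must be careful, because Minkowski's theorem in the form "volume $\geq 2^k$" only yields a nonzero lattice point in the \emph{closure} $\overline{\mathcal{K}}$, whereas we want strict inequalities in the first $k-1$ forms. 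The standard fix, and the step I expect to be the only genuinely delicate point, is a compactness/limiting argument: apply the strict version to the slightly enlarged bodies $\mathcal{K}_\varepsilon$ with $C_i$ replaced by $C_i(1+\varepsilon)$ for $i < k$, obtain nonzero integer points $\vec{x}_\varepsilon$, note they lie in a fixed bounded region (since the last form is bounded by $C_k(1+\varepsilon)$ and $T$ is invertible, actually one bounds all of them), so by discreteness of $\Z^k$ some nonzero $\vec{x}$ is attained for a sequence $\varepsilon \to 0$; that $\vec{x}$ then satisfies $|(\text{$i$-th form})| \leq C_i$ for all $i$ with strictness for $i < k$ — the strictness for $i<k$ follows because if equality held for some such $i$, a slightly smaller $\varepsilon$ would have excluded it, contradicting that $\vec{x}$ works for all small $\varepsilon$.

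Assembling: in all cases we have produced $\vec{x} \in \Z^k \setminus \{\vec 0\}$ with $|x_1\beta_{i,1}+\cdots+x_k\beta_{i,k}| < C_i$ for $1 \leq i \leq k-1$ and $\leq C_k$ for $i = k$, which is exactly the conclusion. The only real obstacle is getting the asymmetry between the strict and non-strict inequalities right; everything else is the textbook proof of Minkowski's linear forms theorem, and since the paper cites it as a known theorem, I would in fact just invoke it without reproving it and move on to the lemma that uses it.
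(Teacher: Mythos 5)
The paper does not actually prove this theorem; it only remarks that it is ``a simple use of Minkowski's convex body theorem'' and cites \cite{survey}, so your decision to invoke it as a black box matches the paper. However, since you did sketch the classical argument, two steps in your sketch are wrong as written. Write $L_i(\vec{x})=x_1\beta_{i,1}+\cdots+x_k\beta_{i,k}$. First, in the degenerate case $\det T=0$ you claim that $\ker T$ contains a rational (hence, after scaling, a nonzero integer) vector; this is false for real matrices --- for $k=2$ with rows $(1,\sqrt2)$ and $(2,2\sqrt2)$ the kernel is spanned by $(\sqrt2,-1)$ and meets $\Q^2$ only in $\vec{0}$. The correct disposal of this case is to note that $T^{-1}\bigl(\prod_{i}(-C_i,C_i)\bigr)$ is an open convex symmetric set containing the nontrivial subspace $\ker T$, hence unbounded of infinite volume, so Minkowski's convex body theorem applies to a large bounded symmetric convex subset of it.

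Second, and more seriously, your limiting argument for the strict/non-strict asymmetry is inverted. If you enlarge the first $k-1$ bounds to $C_i(1+\varepsilon)$ and find one nonzero $\vec{x}\in\Z^k$ lying in $\mathcal{K}_{\varepsilon_j}$ for a sequence $\varepsilon_j\to0$, the limit yields only $|L_i(\vec{x})|\leq C_i$ for $i<k$, which is not the stated conclusion. Your proposed rescue --- ``if equality held, a slightly smaller $\varepsilon$ would have excluded it'' --- is false: a point with $|L_i(\vec{x})|=C_i$ satisfies $|L_i(\vec{x})|<C_i(1+\varepsilon)$ for \emph{every} $\varepsilon>0$ and is never excluded. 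The standard argument relaxes the \emph{last} form instead: take $\mathcal{K}_\varepsilon=\{\vec{y}: |L_i(\vec{y})|<C_i\ (i<k),\ |L_k(\vec{y})|<C_k+\varepsilon\}$, whose volume $2^k\prod_{i<k}C_i\,(C_k+\varepsilon)/|\det T|$ is strictly greater than $2^k$, so Minkowski's convex body theorem gives a nonzero integer point with all $k$ inequalities strict; the points obtained for $\varepsilon\leq1$ all lie in the bounded set $\mathcal{K}_1$, so by discreteness one fixed $\vec{x}$ works along a sequence $\varepsilon_j\to0$, and passing to the limit degrades only the last inequality to $|L_k(\vec{x})|\leq C_k$, which is exactly the asymmetric conclusion required.
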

The proof of the above theorem is a simple use of Minkowski's convex body theorem, and can be found in \cite{survey}. The following lemma makes use of this theorem of Minkowski, and is necessary for the ubiquity approach.

\begin{lemma}\label{contain lemma} Let $B$ be an arbitrary ball in $[0,1]^n$. Then
\begin{equation}\label{contain}
B \subseteq \bigcup_{\substack{\hat{\vec{p}} \in \N^{n+1}\\ |\hat{\vec{p}}| \leq N\\\lVert\hat{\vec{p}} \tilde{\mathfrak{a}} \rVert<\psi(N)/2}}B\left(\vec{p}/q,\dfrac{2^{(d-n)/n}}{N^{(n+1)/n}\psi(N)^{(d-n)/n}} \right).
\end{equation}
\end{lemma}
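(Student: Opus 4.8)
The plan is to apply Minkowski's theorem for systems of linear forms (Theorem \ref{Minkowski}) at each point $\vec{x}_0 \in B$ to produce a nearby rational point $\vec{p}/q$ of the required shape. Fix $\vec{x}_0 \in B$ and set up a system of $k = n+1$ linear forms in the $n+1$ integer unknowns $\hat{\vec{p}} = (q, \vec{p})$. The first $d-n$ forms should be $\hat{\vec{p}} \cdot \mathrm{col}_v(\tilde{\mathfrak{a}})$ for $v = 1, \dots, d-n$, each bounded by a small constant $C_v = \psi(N)/2$ (this encodes $\lVert \hat{\vec{p}} \tilde{\mathfrak{a}} \rVert < \psi(N)/2$, up to taking the forms modulo integers, which is legitimate since adding an integer vector to $\vec{p}$ does not change membership conditions). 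The remaining $n+1 - (d-n) = 2n+1-d$ forms — wait, this only works cleanly when $d - n \le n$; more robustly one takes $d-n$ forms of the ``smallness'' type and $n+1-(d-n)$ forms of the ``localization'' type $q x_{0,i} - p_i$ and $q$ itself, bounded so that the product of all the $C_i$ equals the determinant bound. Concretely, choose the bounds on the localization forms to force $|q| \le N$ and $|p_i/q - x_{0,i}|$ small, with the $C_i$ calibrated so that $\prod C_i = 1 = |\det|$ (the determinant of the system is, up to sign, a minor of $\tilde{\mathfrak{a}}$ augmented by the identity, which can be arranged to have absolute value $1$ by an appropriate choice of which $d-n$ rows of $\tilde{\mathfrak{a}}$ to use, or one simply bounds $|\det|$ by a constant absorbed into the $C_i$).

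Having produced such a nonzero integer vector $\hat{\vec{p}}$, the next step is to verify it genuinely satisfies the three defining conditions of the union in \eqref{contain}: that $\lVert \hat{\vec{p}}\tilde{\mathfrak{a}}\rVert < \psi(N)/2$, that $|\hat{\vec{p}}| \le N$, and that $\vec{p}/q$ lies within the stated radius $r = 2^{(d-n)/n} N^{-(n+1)/n} \psi(N)^{-(d-n)/n}$ of $\vec{x}_0$. The first is immediate from the bound $C_v = \psi(N)/2$ on the smallness forms (interpreting the form values modulo $1$). For the second and third, one reads off from the localization forms that $q \le N$ and $|q x_{0,i} - p_i| < \theta$ for some threshold $\theta$, hence $|x_{0,i} - p_i/q| < \theta/q$; choosing $\theta$ and the smallness bounds so their product over all $i$ matches the determinant constraint forces $\theta/q \le r$. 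The arithmetic here is: the product of the $d-n$ bounds $\psi(N)/2$ times the product of the $n$ localization bounds times the bound on $q$ should equal (a constant times) $|\det| \approx 1$; solving gives each localization bound roughly $N^{-1/n}\psi(N)^{-(d-n)/n}2^{(d-n)/n} \cdot (\text{something})$, and dividing by $q \le N$ produces the claimed radius. One must also handle the degenerate case $q = 0$: Minkowski gives a nonzero point, and if its $q$-component vanished the smallness forms would force $\vec{p}$ into a bounded box while the localization forms (which then read $|p_i| < \theta < 1$) would force $\vec{p} = \vec{0}$, a contradiction — so $q \ge 1$ automatically, and one can further ensure $q \le N$ (not merely $|q| \le N$) by noting $-\hat{\vec{p}}$ works equally well.

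I expect the main obstacle to be the bookkeeping of the determinant and the allocation of the bounds $C_i$ among the $n+1$ forms so that the product identity $\prod C_i = |\det|$ is met exactly while each individual bound is what is needed — in particular making sure the exponent of $N$ comes out as $(n+1)/n$ and the exponents of $\psi(N)$ and of $2$ come out as $(d-n)/n$. The clean way to see the exponent $(n+1)/n$ is: there are effectively $n$ ``geometric'' directions each contributing a factor $\sim (\text{radius})$, one direction (the $q$-direction) contributing $\sim N$, and $d-n$ smallness directions each contributing $\sim \psi(N)$; setting the total product to a constant and solving $(\text{radius})^n \cdot N \cdot \psi(N)^{d-n} \asymp 1$ gives exactly $\text{radius} \asymp N^{-(n+1)/n}\psi(N)^{-(d-n)/n}$, with the constant $2^{(d-n)/n}$ coming from the factor $1/2$ in the smallness bound raised to the appropriate power. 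A secondary point needing care is that the statement quantifies over $\vec{x}_0$ ranging over all of $B$, so the argument must be applied pointwise and the resulting rational points collected; no uniformity in $\vec{x}_0$ is needed beyond what Minkowski already provides at each point, and the condition ``$\vec{p}/q \in B$'' in the index set of \eqref{contain} — wait, the index set only requires $|\hat{\vec{p}}| \le N$ and $\lVert\hat{\vec{p}}\tilde{\mathfrak{a}}\rVert < \psi(N)/2$, not $\vec{p}/q \in B$, so in fact the union is over a fixed finite set and one just needs every $\vec{x}_0 \in B$ to be covered by the ball around some such $\vec{p}/q$, which is exactly what the pointwise Minkowski application delivers.
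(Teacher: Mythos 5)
Your proposal is correct and follows essentially the same route as the paper: a pointwise application of Minkowski's theorem for systems of linear forms at each $\vec{x}_0\in B$, with $d-n$ smallness forms bounded by $\psi(N)/2$, $n$ localization forms, and the form $q\leq N$, calibrated so that $\prod C_i$ matches the determinant. The only difference is bookkeeping: where you waver between $n+1$ forms taken ``modulo integers'' and a larger system, the paper introduces the $d-n$ auxiliary integer unknowns $r_1,\dots,r_{d-n}$ explicitly, yielding a clean $(d+1)\times(d+1)$ block-triangular system with determinant exactly $1$, which resolves your count of forms and makes the product identity $\prod C_i=1$ exact.
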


\begin{proof}
For any $\vec{x}=(x_1, \ldots, x_n) \in B$,
\begin{equation*}
\vec{x} \in  \bigcup_{\substack{\hat{\vec{p}} \in \N^{n+1}\\ |\hat{\vec{p}}|\leq N\\\lVert\hat{\vec{p}} \tilde{\mathfrak{a}} \rVert<\psi(N)/2}}B\left(\vec{p}/q,\dfrac{2^{(d-n)/n}}{N^{(n+1)/n}\psi(N)^{(d-n)/n}} \right)
\end{equation*}
if the following system of linear forms has an integer solution.

\begin{align*}
-r_1+p_1a_{2,1}+ \cdots +p_na_{n+1,n}+qa_{1,1} &< \psi(N)/2 \\
 \vdots \\
-r_{d-n}+p_1a_{2,d-n}+ \cdots +p_na_{n+1,d-n}+qa_{1,d-n} &< \psi(N)/2 \\
-p_1+qx_1 &< \dfrac{2^{(d-n)/n}}{N^{1/n}\psi(N)^{(d-n)/n}}  \\
\vdots \\
-p_n+qx_1 &< \dfrac{2^{(d-n)/n}}{N^{1/n}\psi(N)^{(d-n)/n}}  \\
q  &\leq N
\end{align*}
where $r_1, \ldots r_{d-n}\in \Z$, $\vec{p}={p_1, \ldots, p_n}$, and $a_{i,j}$ are the entries of $\tilde{\mathfrak{a}}$. Then the desired integer point is $(r_1, \ldots, r_{d-n},p_1, \ldots, p_n, q) \in \Z^{d+1}$ and the matrix $(\beta_{i,j})$ is equal to the block matrix
\begin{center}
\begin{equation}
\left(\begin{array}{c|c}
-I_{d-n} & \tilde{\mathfrak{a}}^T  \\
\hline 
0 & A 
\end{array}\right)
\end{equation}
\end{center}
where $I_{d-n}$ is the $d-n$ identity matrix, $0$ is the $d-n,n+1$ zero matrix, and $A$ is the block matrix defined by
\begin{center}
\begin{equation}
A=\left(\begin{array}{c|c}
-I_{n} & \vec{x}^T  \\
\hline 
0 & 1 
\end{array}\right)
\end{equation}
\end{center}
where $I_{n}$ is the $n$ identity matrix and $0$ is the zero $n$-vector. Now $\left|\det(\beta_{i,j})_{1 \leq i, j \leq k}\right|=1$ and 
\begin{equation*}
\prod_{i=1}^k C_i= \left( \dfrac{\psi(N)}{2}\right)^{d-n} \left(\dfrac{2^{(d-n)/n}}{N^{1/n}\psi(N)^{(d-n)/n}}\right)^n N=1
\end{equation*}
so 
\begin{equation*}
\left|\det(\beta_{i,j})_{1 \leq i, j \leq k}\right| \leq \prod_{i=1}^k C_i
\end{equation*}
and by Theorem \ref{Minkowski}, there is an integer solution to the above system of linear forms, hence (\ref{contain}) holds.
\end{proof}

For the purpose of this problem, define 
$$\mathcal{J}=\{\hat{\vec{p}}=(q,\vec{p}) \in \N \times \Z^n : \lVert\hat{\vec{p}} \tilde{\mathfrak{a}} \rVert< \psi \left( \left| \hat{\vec{p}}\right| \right)/2 \}$$
$$\mathcal{R}=\{\vec{p}/q \in \Q^n \cap [0,1]^n: \hat{\vec{p}} \in \mathcal{J}\}$$
$$\beta_{\hat{\vec{p}}}=|\hat{\vec{p}}|,$$
and 
$$\rho(q)=\dfrac{2^{(d-n)/n}}{q^{(n+1)/n}\psi(q)^{(d-n)/n}}.$$
With these definitions, we prove the following proposition.

\begin{proposition}\label{prop} For $\tilde{\mathfrak{a}} \in \textup{Mat}_{n+1,d-n}$ and $0 \leq s \leq n$ such that $\omega(\mathfrak{a})<\frac{n(d-n+s)}{n+1-s}$, and with a function $\psi: \R \to \R^+$ such that for all $\epsilon>0$, $\psi(q) \geq q^{\frac{s-n-1}{d-n+s}-\epsilon}$ for sufficiently large $q$, the triple $(\mathcal{R},\rho,k)$ with $k >2^{(n+d)/(n+1)}3^{(n+d+2)/(n+1)}$ forms a local $m$-ubiquitous system.

\end{proposition}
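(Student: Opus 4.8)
The plan is to verify the local $m$-ubiquity inequality (\ref{ubiq}) directly, using Lemma \ref{contain lemma} to get a covering and Theorem \ref{mad theorems} to control the overcounting. Fix an arbitrary ball $B = B(\vec{x}_0,\eta) \subseteq [0,1]^n$ with $m(B) = (2\eta)^n$. The first step is to apply Lemma \ref{contain lemma} with $N = k^{t-1}$: this gives
$$B \subseteq \bigcup_{\substack{\hat{\vec{p}} \in \mathcal{J} \\ |\hat{\vec{p}}| \leq k^{t-1}}} B\!\left(\vec{p}/q,\ \rho(k^{t-1})\right),$$
where I have recognized the radius appearing in (\ref{contain}) as exactly $\rho(N)$ with $N = k^{t-1}$. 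Since $\psi$ is decreasing and $\rho(q) = 2^{(d-n)/n} q^{-(n+1)/n}\psi(q)^{-(d-n)/n}$ is increasing in $q$, we have $\rho(k^{t-1}) \le \rho(k^t)$, so the balls on the right may be enlarged to radius $\rho(k^t)$. Intersecting everything with $B$, we conclude
$$B = B \cap \bigcup_{\substack{\hat{\vec{p}} \in \mathcal{J} \\ |\hat{\vec{p}}| \leq k^{t-1}}} B\!\left(\vec{p}/q,\ \rho(k^t)\right).$$
At this point the union is over all weights $\beta_{\hat{\vec{p}}} = |\hat{\vec{p}}| \le k^{t-1}$, not the dyadic-type range $k^{t-1} < \beta_{\hat{\vec{p}}} \le k^t$ demanded in (\ref{ubiq}); the second step is to throw away the low-weight contribution and show it is negligible.

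For the second step, split the union into $\{|\hat{\vec{p}}| \le k^{t-1}\}$ versus the genuinely needed $\{k^{t-1} < |\hat{\vec{p}}| \le k^t\}$ — wait, the covering only produces vectors with $|\hat{\vec{p}}| \le k^{t-1}$, so instead I rerun Lemma \ref{contain lemma} with $N = k^t$ to get a covering by balls $B(\vec{p}/q,\rho(k^t))$ indexed by $\hat{\vec{p}} \in \mathcal{J}$ with $|\hat{\vec{p}}| \le k^t$, and then must discard those $\hat{\vec{p}}$ with $|\hat{\vec{p}}| \le k^{t-1}$. The key estimate is that the total measure of the discarded balls is at most a small fraction of $m(B)$. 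Each such ball has measure $\ll \rho(k^t)^n = 2^{d-n} k^{-t(n+1)}\psi(k^t)^{-(d-n)}$, and by Theorem \ref{mad theorems} — applicable precisely because $\omega(\mathfrak{a}) < \frac{n(d-n+s)}{n+1-s}$ and $\psi$ satisfies the lower bound hypothesis — the number of relevant $\hat{\vec{p}}$ with $|\hat{\vec{p}}| \le k^{t-1}$ and $\vec{p}/q \in B$ is at most $3^{d+n+2}\psi(k^t)^{d-n}k^{(t-1)(n+1)}m(B)$ for $t > t_0$. Multiplying, the discarded measure is at most $3^{d+n+2}\psi(k^t)^{d-n}k^{(t-1)(n+1)}m(B) \cdot 2^{d-n} k^{-t(n+1)}\psi(k^t)^{-(d-n)} = 3^{d+n+2}2^{d-n} k^{-(n+1)} m(B)$. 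Choosing $k > 2^{(n+d)/(n+1)}3^{(n+d+2)/(n+1)}$ makes $3^{d+n+2}2^{d-n}k^{-(n+1)} \le 3^{d+n+2}2^{d+n}k^{-(n+1)} < 1$, in fact bounded by some $\theta < 1$; then the measure covered by the balls with $k^{t-1} < |\hat{\vec{p}}| \le k^t$ is at least $(1-\theta)m(B)$, which is the required inequality (\ref{ubiq}) with $\kappa = 1 - \theta$ and the constant $t_0$ from Theorem \ref{mad theorems} (which depends on $B$, as allowed).

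The main obstacle is the bookkeeping in the second step: one has to be careful that the covering from Lemma \ref{contain lemma} at scale $N = k^t$ genuinely covers $B$ with radius exactly $\rho(k^t)$, that monotonicity of $\psi$ (hence of $\rho$) is used in the right direction, and that the count in Theorem \ref{mad theorems} is with the ball $B$ (not all of $[0,1]^n$) and with the truncation $|\hat{\vec{p}}| \le k^{t-1}$ matching the vectors we wish to discard. A minor additional point is that the hypothesis $\psi(q) \ge q^{(s-n-1)/(d-n+s)-\epsilon}$ must hold for the particular $\epsilon$ used when invoking Theorem \ref{mad theorems}; since that theorem's $t_0$ absorbs the dependence on $\epsilon$, we simply fix any admissible $\epsilon$ and the argument goes through for all $t > t_0$. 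The "regular approximating function" and Hausdorff-measure consequences are not needed here — Proposition \ref{prop} asserts only local $m$-ubiquity, so the proof ends once (\ref{ubiq}) is established.
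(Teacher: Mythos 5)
Your proposal follows essentially the same route as the paper: apply Lemma \ref{contain lemma} at scale $N=k^{t}$ to cover $B$ by balls of radius $\rho(k^{t})$, then use Theorem \ref{mad theorems} to show that the balls with $|\hat{\vec{p}}|\leq k^{t-1}$ account for at most a fraction $1-\kappa$ of $m(B)$, which is exactly the paper's argument. The one bookkeeping slip is that you count only centers $\vec{p}/q\in B$, whereas balls centered just outside $B$ can still intersect it, so the count must be taken over the doubled ball $2B$ (as the paper does); this only costs a factor $2^{n}$, and your stated choice of $k$, via the comparison with $3^{d+n+2}2^{d+n}k^{-(n+1)}$, already absorbs it.
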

\begin{proof}

Let $B$ be an arbitrary ball in $[0,1]^n$. By Lemma \ref{contain lemma},
\begin{equation} 
m\left(B \cap \bigcup_{\substack{\hat{\vec{p}} \in \N^{n+1}\\ |\hat{\vec{p}}| \leq N\\\lVert\hat{\vec{p}} \tilde{\mathfrak{a}} \rVert<\psi(N)/2}}B\left(\vec{p}/q,\dfrac{2^{(d-n)/n}}{N^{(n+1)/n}\psi(N)^{(d-n)/n}} \right)\right) =m(B),
\end{equation}
therefore, instead of directly showing that there is some $0 < \kappa <1$ for which
\begin{equation}\label{ubi1}
m\left(B \cap \bigcup_{\substack{\hat{\vec{p}} \in \N^{n+1}\\ k^{t-1} <|\hat{\vec{p}}| \leq k^{t}\\\lVert\hat{\vec{p}} \tilde{\mathfrak{a}} \rVert<\psi(k^{t})/2}}B\left(\vec{p}/q,\dfrac{2^{(d-n)/n}}{q^{(n+1)/n}\psi(q)^{(d-n)/n}} \right)\right) \geq \kappa m(B)
\end{equation}
holds, we will instead show that 

\begin{equation} m\left(B \cap \bigcup_{\substack{\hat{\vec{p}} \in \N^{n+1}\\ |\hat{\vec{p}}| \leq k^{t-1}\\ \lVert\hat{\vec{p}} \tilde{\mathfrak{a}}\rVert<\psi(k^{t})/2}}B\left(\vec{p}/q,\dfrac{2^{(d-n)/n}}{k^{(n+1)t/n}\psi(k^t)^{(d-n)/n}} \right)\right) < m(B)(1-\kappa)
\end{equation}
from which it follows that \eqref{ubi1} holds.

Note that
\begin{align*}
&m\left(B \cap \bigcup_{\substack{\hat{\vec{p}} \in \N^{n+1}\\ |\hat{\vec{p}}| \leq k^{t-1}\\ \lVert\hat{\vec{p}} \tilde{\mathfrak{a}} \rVert<\psi(k^{t})/2}}B\left(\vec{p}/q,\dfrac{2^{(d-n)/n}}{k^{(n+1)t/n}\psi(k^t)^{(d-n)/n}} \right)\right)  \\
&< \sum_{\substack{\hat{\vec{p}} \in \N^{n+1}\\ \vec{p}/q \in 2B \\ |\hat{\vec{p}}| \leq k^{t-1}\\ \lVert\hat{\vec{p}} \tilde{\mathfrak{a}} \rVert<\psi(k^{t})/2}}m\left(B\left(\vec{p}/q,\dfrac{2^{(d-n)/n}}{k^{(n+1)t/n}\psi(k^t)^{(d-n)/n}} \right)\right) \\
\intertext{ where $2B$ is a doubling of $B$ to account for balls centered at points $\vec{p}/q \not\in B$ which intersect $B$. Note that a doubling is large enough to capture all $\vec{p}/q$ which are the centers of balls which could intersect $B$ if $t >t_0$ where $t_0  \gg 0$ depends only on $\mathfrak{a}, B,$ and $ \epsilon$. This is then}
 &=\dfrac{2^d}{k^{(n+1)t}\psi(k^t)^{d-n}}\# \left\lbrace \hat{\vec{p}} \in \N^{n+1}: \lVert\hat{\vec{p}} \tilde{\mathfrak{a}} \rVert<\dfrac{\psi\left(k^{t}\right)}{2}, |\hat{\vec{p}}|\leq k^{t-1}, \vec{p}/q \in 2B\right\rbrace \\ 
\text{which, by Theorem \ref{mad theorems} is} \\
&< \dfrac{2^d}{k^{(n+1)t}\psi(k^t)^{d-n}}\left(3^{n+d+2} \psi(k^t)^{d-n}k^{(t-1)(n+1)}m(2B)\right) \\
& =\dfrac{2^{n+d}3^{n+d+2}}{k^{n+1}}m(B) \\
\intertext{taking $k>2^{(n+d)/(n+1)}3^{(n+d+2)/(n+1)}$, this is}
&<(1-\kappa) m(B)
\end{align*}
for some $0<\kappa<1.$
\end{proof}

We are now ready for the proof of the main theorems.

\section{Proof of main theorems}

\begin{proof}

For  $\psi: \R \to \R^+$ decreasing such that for all $\epsilon>0$, $\psi(q) \geq q^{\frac{s-n-1}{d-n+s}-\epsilon}$ for sufficiently large $q$, let 
$$\mathcal{J}=\{\hat{\vec{p}}=(q,\vec{p}) \in \N \times \Z^n : \lVert\hat{\vec{p}} \tilde{\mathfrak{a}} \rVert< \psi \left( \left| \hat{\vec{p}}\right| \right)/2 \}$$
$$\mathcal{R}=\{\vec{p}/q \in \Q^n \cap [0,1]^n: \hat{\vec{p}} \in \mathcal{J}\}$$
$$\beta_{\hat{\vec{p}}}=|\hat{\vec{p}}|,$$
and 
$$\rho(q)=\dfrac{2^{(d-n)/n}}{q^{(n+1)/n}\psi(q)^{(d-n)/n}}.$$
By Proposition \ref{prop}, $(\mathcal{R},\rho,k)$ forms a local $m$-ubiquitous system. This restriction on $\psi(q)$ will soon be show to be taken without loss of generality. Let
$$\Psi(q)=\dfrac{c\psi(q)}{q}$$
with
$$c=\left(2n|\mathfrak{a}|\right)^{-1}. $$
By the definition of $\Psi(q)$, we have
\begin{equation*}
\dfrac{\Psi(k^{t+1})}{\Psi(k^t)}=\dfrac{c\psi(k^{t+1})}{ck\psi(k^t)}	\leq \dfrac{1}{k},
\end{equation*}
so $\Psi$ is a regular function. Also, we have that
\begin{equation*}
\sum_{t \in \N} \Psi(k^t)^s\rho(k^t)^{-n}  =\left(\dfrac{c^s}{2^{(d-n)}}\right)\sum_{t \in \N} k^t\psi(k^t)^{d-n+s}k^{t(n-s)}
\end{equation*}
which diverges by Cauchy's condensation test, so by Theorem \ref{ubi}, $\mathcal{H}^s(\Lambda(\Psi) \cap [0,1]^n)=\mathcal{H}^s([0,1]^n)$. The following lemma translates this result back to $\Ln$.

\begin{lemma}\label{ubisubspace} If  $\mathcal{H}^s(\Lambda(\Psi) \cap [0,1]^n)=\mathcal{H}^s([0,1]^n)$ then $\mathcal{H}^s(W(\psi) \cap (\Ln \cap \mathcal{S}_{\vec{0}}))=\mathcal{H}^s([0,1]^n)$.
\end{lemma}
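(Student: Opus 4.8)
The plan is to transfer the full-measure statement for the limsup set $\Lambda(\Psi)$ living in the base cube $[0,1]^n$ up to the affine subspace $\Ln$, via the bi-Lipschitz parametrization $\vec{x} \mapsto (\vec{x},\tilde{\vec{x}}\tilde{\mathfrak{a}})$. First I would record that this parametrization map $\iota \colon [0,1]^n \to \Ln \cap \mathcal{S}_{\vec{0}}$ is bi-Lipschitz: its Lipschitz constant in either direction is controlled by $|\mathfrak{a}|$ (the forward map has Jacobian $[I_n \mid \mathfrak{a}]$, the inverse is the coordinate projection onto $\R^n$, which is $1$-Lipschitz). A bi-Lipschitz map preserves Hausdorff measure up to a bounded multiplicative constant, and in particular preserves sets of full and of zero $\mathcal{H}^s$-measure; since $\mathcal{H}^s(\Ln \cap \mathcal{S}_{\vec{0}}) = \mathcal{H}^s(\iota([0,1]^n))$ is comparable to $\mathcal{H}^s([0,1]^n)$, it suffices to show that $\iota$ maps $\Lambda(\Psi) \cap [0,1]^n$ into $W(\psi) \cap \Ln$ up to a set of $\mathcal{H}^s$-measure zero — in fact I expect a clean set inclusion $\iota(\Lambda(\Psi) \cap [0,1]^n) \subseteq W(\psi) \cap (\Ln \cap \mathcal{S}_{\vec{0}})$.

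The heart of the matter is the pointwise containment. Suppose $\vec{x} \in \Lambda(\Psi) \cap [0,1]^n$. By definition of the limsup set, there are infinitely many $\hat{\vec{p}} = (q,\vec{p}) \in \mathcal{J}$ with $|\vec{x} - \vec{p}/q| < \Psi(q) = c\psi(q)/q$ and $\lVert \hat{\vec{p}}\tilde{\mathfrak{a}}\rVert < \psi(q)/2$; the latter means there is $\vec{r} \in \Z^{d-n}$ with $|\widetilde{\vec{p}/q}\,\tilde{\mathfrak{a}} - \vec{r}/q| < \psi(q)/2q$. Setting $\vec{m}/q = (\vec{p}/q,\vec{r}/q) \in \Q^d$ and $\vec{y} = (\vec{x},\tilde{\vec{x}}\tilde{\mathfrak{a}}) \in \Ln$, I would estimate $|\vec{y} - \vec{m}/q|$ by the triangle inequality in two blocks: in the first $n$ coordinates, $|\vec{x} - \vec{p}/q| < c\psi(q)/q \le \psi(q)/2q$ (since $c \le 1/2$); in the last $d-n$ coordinates, $|\tilde{\vec{x}}\tilde{\mathfrak{a}} - \vec{r}/q| \le |\tilde{\vec{x}}\tilde{\mathfrak{a}} - \widetilde{\vec{p}/q}\,\tilde{\mathfrak{a}}| + |\widetilde{\vec{p}/q}\,\tilde{\mathfrak{a}} - \vec{r}/q|$, where the first term is $|(\vec{x}-\vec{p}/q)\mathfrak{a}| \le n|\mathfrak{a}|\,|\vec{x}-\vec{p}/q| < n|\mathfrak{a}| \cdot c\psi(q)/q = \psi(q)/2q$ by the choice $c = (2n|\mathfrak{a}|)^{-1}$, and the second is $< \psi(q)/2q$. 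Hence $|\vec{y} - \vec{m}/q| < \psi(q)/q$, i.e. $\lVert q\vec{y}\rVert < \psi(q)$, for infinitely many $q$, so $\vec{y} \in W(\psi)$; and $\vec{y} \in \Ln \cap \mathcal{S}_{\vec{0}}$ since $\vec{x} \in [0,1]^n$. (One should check $q \to \infty$ along this subsequence, which follows because only finitely many $\hat{\vec{p}}$ have a given norm bound, or because $\psi$ decreasing forces $q$ unbounded.)

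Combining, $\mathcal{H}^s(W(\psi) \cap (\Ln \cap \mathcal{S}_{\vec{0}})) \ge \mathcal{H}^s(\iota(\Lambda(\Psi) \cap [0,1]^n))$, and by bi-Lipschitz invariance the right side is $\ge C^{-s}\mathcal{H}^s(\Lambda(\Psi) \cap [0,1]^n) = C^{-s}\mathcal{H}^s([0,1]^n) > 0$ when $s < n$, giving $\mathcal{H}^s(W(\psi) \cap (\Ln \cap \mathcal{S}_{\vec{0}})) = \infty = \mathcal{H}^s([0,1]^n)$ in that range; when $s = n$, $\mathcal{H}^n$ is comparable to Lebesgue measure and the full-measure statement transfers directly. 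I do not expect any serious obstacle here — the constant $c$ was engineered in Section \ref{index} precisely to make the two-block triangle inequality close — the only mild subtlety is being careful that the statement is phrased as "$=\mathcal{H}^s([0,1]^n)$" rather than "$=\mathcal{H}^s(\Ln \cap \mathcal{S}_{\vec{0}})$", which is legitimate because the two are comparable and, for $s<n$, both infinite, while for $s=n$ they agree up to the bi-Lipschitz normalization already implicit in the paper's conventions.
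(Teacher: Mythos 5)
Your proof is correct and follows essentially the same route as the paper's: the same pointwise containment $\iota(\Lambda(\Psi)\cap[0,1]^n)\subseteq W(\psi)\cap(\Ln\cap\mathcal{S}_{\vec{0}})$ via the two-block triangle inequality using $c=(2n|\mathfrak{a}|)^{-1}$. Your added remarks on bi-Lipschitz invariance of $\mathcal{H}^s$ in fact make the measure-transfer step more explicit than the paper, which justifies it only by injectivity of the parametrization.
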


\begin{proof} The mapping $f:[0,1]^n \to \Ln \cap \mathcal{S}_{\vec{0}}$ given by $\vec{x} \mapsto (\vec{x}, \tilde{\vec{x}}\tilde{\mathfrak{a}})$ is one-to-one, thus it only remains to be shown thatif $\vec{x} \in \Lambda(\psi)$, then $f(\vec{x}) \in W(\psi).$ We will show that if $\vec{x} \in B(\vec{p}/q, \Psi(q)),$ for $\vec{p}/q \in \Q^d \cap [0,1]^n$ such that $\hat{\vec{p}} \in \mathcal{J}$ then $(\vec{x}, \tilde{\vec{x}}\tilde{\mathfrak{a}}) \in B_{\mathcal{A}}(\vec{m}/q,\psi(q)/q)$ for some $\vec{m}\in \Z^d.$

For any $\vec{p}/q \in \Q^d \cap [0,1]^n$ such that $\hat{\vec{p}} \in \mathcal{J}$, $\lVert \hat{\vec{p}}\tilde{\mathfrak{a}} \rVert < \psi(|\hat{\vec{p}}|)/2$ by definition of the index set $\mathcal{J}$, so there exists some $\vec{m} \in \R^d$ such that $(\vec{p}/q, \widetilde{\vec{p}/q}\tilde{\mathfrak{a}}) \in B(\vec{m}/q, \psi(q)/2q)$. Therefore, $|\vec{m}/q-(\vec{p}/q,\widetilde{\vec{p}/q}\tilde{\mathfrak{a}})|<\psi(q)/2q$. Since $\vec{x} \in B(R_{\hat{\vec{p}}}, \Psi(q))$, $|\vec{p}/q-\vec{x}|< \Psi(q)<\psi(q)/2q$ and then

\begin{align*}
|\widetilde{\vec{p}/q}\tilde{\mathfrak{a}}-\tilde{\vec{x}}\tilde{\mathfrak{a}}|&=
|(\vec{p}/q-\vec{x})\mathfrak{a}| \\
&=\max_{v=1,\ldots, d-n}\left(\vec{p}/q-\vec{x} \right) \text{col}_v(\mathfrak{a})\\
&<\max_{v=1,\ldots, d-n} \left\lbrace \left(\dfrac{c\psi(q)}{q}\right)a_{1,v}+\cdots +\left(\dfrac{c\psi(q)}{q}\right)a_{n,v}\right\rbrace \\
\intertext{ where $a_{i,j}$ are the entries of $\mathfrak{a}$. This is then}
&<\max_{v=1,\ldots, d-n} \left\lbrace \left(\dfrac{c\psi(q)}{q}\right)|\text{col}_v(\mathfrak{a})|+\cdots +\left(\dfrac{c\psi(q)}{q}\right)|\text{col}_v(\mathfrak{a})|\right\rbrace \\
&<n\dfrac{c\psi(q)}{q}|\mathfrak{a}|\\
\intertext{which, by the choice of $c$, is}
&<\dfrac{\psi(q)}{2q}.
\end{align*} 
Therefore $|(\vec{p}/q, \widetilde{\vec{p}/q}\tilde{\mathfrak{a}})-(\vec{x}, \tilde{\vec{x}}\tilde{\mathfrak{a}})|<\psi(q)/2q.$   Then by the triangle inequality, $|\vec{m}/q-(\vec{x}, \tilde{\vec{x}}\tilde{\mathfrak{a}})|<\psi(q)/q$ so $(\vec{x}, \tilde{\vec{x}}\tilde{\mathfrak{a}}) \in B(\vec{m}/q), \psi(q)/q).$
\end{proof}

By the above lemma, $\mathcal{H}^s(W(\psi) \cap (\Ln \cap \mathcal{S}_{\vec{0}}))=\mathcal{H}^s([0,1]^n)$, but it still remains to be shown that $\mathcal{H}^s(W(\psi) \cap \Ln)=\mathcal{H}^s(\Ln)$.

Since $W(\psi)$ is invariant under integer shifts, any segment $\Ln\cap\mathcal{S}_{\vec{v}}$ of $\Ln$ lying in another strip of $\R^d$ can be translated into $\mathcal{S}_{\vec{0}}$ and considered as a segment there. This shift will affect $\vec{a}_0$, so that this segment is defined in $\mathcal{S}_{\vec{0}}$ by $\tilde{\mathfrak{a}}_{\vec{v}}=\left[\begin{array}{c}\vec{a}_0+\vec{v}\mathfrak{a} \\ \mathfrak{a} \end{array}\right]$. Since both $\tilde{\mathfrak{a}}$ and $\tilde{\mathfrak{a}}_{\vec{v}}$ have $\mathfrak{a}$ as the matrix parametrizing the tilt, the above argument shows that $\mathcal{H}^s(\mathcal{L}_{\tilde{\mathfrak{a}}_{\vec{v}}}\cap \mathcal{S}_{\vec{0}})=\mathcal{H}^s(\Ln\cap [0,1]^n)$ where $\mathcal{L}_{\tilde{\mathfrak{a}}_{\vec{v}}}$ is the affine subspace parametrized by $\tilde{\mathfrak{a}}_{\vec{v}}.$ Since $\mathcal{H}^s(\mathcal{L}_{\tilde{\mathfrak{a}}_{\vec{v}}}\cap \mathcal{S}_{\vec{0}})=\mathcal{H}^s(\Ln \cap \mathcal{S}_{\vec{v}})$ and $\cup_{\vec{v} \in \Z^n} \mathcal{S}_{\vec{v}}=\R^d$,  $\mathcal{H}^s(W(\psi) \cap \Ln)=\mathcal{H}^s(\Ln)$. 

All that is left to show is that we can make the assumption that $\psi(q) \geq q^{\frac{s-n-1}{d-n+s}-\epsilon}$ used in Theorem \ref{mad theorems} without loss of generality, for which we will need the following lemma, the proof of which can be found in \cite[Lemma 3.10]{dod}.

\begin{lemma}\textup{(Hausdorff-Cantelli Lemma)} 
Let $E$ be a set in $\R^n$ and suppose that 
$$E \subseteq \{t \in \R^n : t \in H_j \text{ for infinitely many } j \in \N \}$$
where $\{H_j\}_{j \in \N}$ is a family of hypercubes. If for some $s >0,$
$$\sum_{j=1}^{\infty} \ell(H_j)^s < \infty,$$
then $\mathcal{H}^s(E)=0$.
\end{lemma}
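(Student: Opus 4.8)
The plan is to argue directly from the definition of $s$-dimensional Hausdorff measure as a covering outer measure. Recall that for $\rho>0$ one sets
\[
\mathcal{H}^s_\rho(E)=\inf\left\{\sum_i (\operatorname{diam} U_i)^s : E\subseteq\bigcup_i U_i,\ \operatorname{diam}U_i\le\rho\right\},
\]
and $\mathcal{H}^s(E)=\lim_{\rho\to 0^+}\mathcal{H}^s_\rho(E)$, the limit existing because $\mathcal{H}^s_\rho(E)$ is non-increasing in $\rho$. The whole proof is then a measure-theoretic Borel--Cantelli estimate: I will truncate the family of hypercubes and read off a good cover from the tail.

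First I would use the convergence of $\sum_{j} \ell(H_j)^s$: its tails $T_N:=\sum_{j\ge N}\ell(H_j)^s$ tend to $0$ as $N\to\infty$, and in particular $\ell(H_j)\to 0$, so that $\rho_N:=\sqrt{n}\,\sup_{j\ge N}\ell(H_j)\to 0$. The key observation is that for every $N$ the truncated family $\{H_j\}_{j\ge N}$ still covers $E$: by hypothesis each $t\in E$ lies in $H_j$ for infinitely many $j$, hence in some $H_j$ with $j\ge N$. Since a hypercube $H\subset\R^n$ of side length $\ell(H)$ has $\operatorname{diam}H=\sqrt n\,\ell(H)$, which is $\le\rho_N$ whenever $j\ge N$, the family $\{H_j\}_{j\ge N}$ is an admissible $\rho_N$-cover of $E$, and therefore
\[
\mathcal{H}^s_{\rho_N}(E)\le\sum_{j\ge N}(\operatorname{diam}H_j)^s=n^{s/2}\sum_{j\ge N}\ell(H_j)^s=n^{s/2}T_N.
\]

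Letting $N\to\infty$ the right-hand side tends to $0$, so $\inf_N\mathcal{H}^s_{\rho_N}(E)=0$; by monotonicity of $\mathcal{H}^s_\rho(E)$ in $\rho$ together with $\rho_N\to 0$ this forces $\mathcal{H}^s(E)=0$, as claimed. If one prefers the variant of $\mathcal{H}^s$ defined using balls rather than arbitrary sets, one simply replaces each $H_j$ by the circumscribed ball of radius $\tfrac{\sqrt n}{2}\ell(H_j)$, which changes only the dimensional constant $n^{s/2}$ and not the conclusion. I do not anticipate a genuine obstacle here; the only points requiring care are that the truncated family genuinely re-covers $E$ — which is exactly where the ``infinitely many $j$'' hypothesis enters, and is the reason the statement would fail under a weaker hypothesis — and the harmless passage from side length to diameter.
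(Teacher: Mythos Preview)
Your argument is correct and is the standard direct proof from the definition of Hausdorff measure. The paper itself does not give a proof of this lemma at all; it simply quotes the statement and refers to \cite[Lemma 3.10]{dod} for a proof, so there is nothing to compare against beyond noting that your self-contained argument supplies what the paper outsources to a reference.
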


 Let $\phi=q^{\frac{s-n-1}{d-n+s}-\epsilon}$, and define $\bar{\psi}(q)=\max\{\psi(q),\phi(q)\}$. Then $\bar{\psi}$ satisfies all our assumptions, so almost every point on $\Ln$ is $\bar{\psi}$-approximable.

In $[0,1]^n$, consider the set 
\begin{equation}
W_t(\phi)=\bigcup_{\substack{\hat{\vec{p}} \in \N^{n+1}\\ |\hat{\vec{p}}|<k^{t-1} \\ \lVert\hat{\vec{p}} \tilde{\mathfrak{a}} \rVert<\phi(k^t)}} B\left( \vec{p}/q, \dfrac{\phi(k^t)}{k^t}\right) \cap [0,1]^n
\end{equation}
and note that $\limsup_{t \to \infty} W_t(\phi)=\Lambda(\Psi)\cap[0,1]^n$ where $\Lambda (q)=\phi(q)/q.$ Then 
\begin{align*}
\sum_{q \in \N} \ell(W_t(\phi))^s &\leq \sum_{t \in \N}\left( \dfrac{2\phi(k^t)}{k^t}\right)^s \#\{\hat{\vec{p}} \in \N^{n+1} : \lVert\hat{\vec{p}}\tilde{\mathfrak{a}}\rVert<\phi(k^t), |\hat{\vec{p}}|<k^{t-1}\} \\
& \leq \dfrac{2^s3^{d+n+2}}{k^{n+1}}\sum_{t \in \N} k^t\phi(k^t)^{d-n+s}k^{t(n-s)} \\
\end{align*}
which converges because $\sum_{q \in \N} \phi(q)^{d-n+s}q^{n-s}=\sum_{q \in \N}q^{-1-\epsilon}$ does by the Cauchy condensation test. Thus $\mathcal{H}^s(\limsup(W_t(\phi)))=\mathcal{H}^s(\Lambda(\Psi) \cap [0,1]^n)=0$ by the Hausdorff-Cantelli lemma, and by the same argument as above, $\mathcal{H}^s(W(\phi)\cap \Ln)=0.$ Since every point which is $\bar{\psi}$-approximable but not $\phi$-approximable is $\psi$-approximable, $\mathcal{H}^s(W(\psi)\cap \Ln)=\mathcal{H}^s(\Ln).$
\end{proof}

\begin{acknowledgments} \textup{The author would like to thank his advisor, Felipe Ram\'irez, for his advisement and mentorship throughout the course of this project. He would also like to thank Jing-Jing Huang for the helpful correspondence.}

\end{acknowledgments}

\bibliography{bib}

\begin{thebibliography}{BBKM02}

\bibitem[BBDD00]{BBDD00}
V~Beresnevich, V~Bernik, H~Dickinson, and M~Dodson.
\newblock On linear manifolds for which an approximation khintchine theorem
  takes place, vestsi nats.
\newblock {\em Acad. Navuk Belarusi. Ser. Fiz.-Mat. Navuk}, (2):14--17, 2000.

\bibitem[BBKM02]{BBKM02}
Victor Beresnevich, Vasilii Bernik, Dmitry Kleinbock, and Gregory Margulis.
\newblock Metric diophantine approximation: the khintchine-groshev theorem for
  nondegenerate manifolds.
\newblock 2002.

\bibitem[BD99]{dod}
Vasilii Bernik and MM~Dodson.
\newblock Metric diophantine approximation on manifolds.
\newblock (137), 1999.

\bibitem[BDV06]{limsup}
Victor Beresnevich, Detta Dickinson, and Sanju Velani.
\newblock {\em Measure theoretic laws for lim sup sets}, volume 179.
\newblock American Mathematical Soc., 2006.

\bibitem[Ber12]{beresnevich2012}
Victor Beresnevich.
\newblock Rational points near manifolds and metric diophantine approximation.
\newblock {\em Annals of Mathematics}, pages 187--235, 2012.

\bibitem[BKM01]{BKM01}
Vasilii Bernik, Dmitry Kleinbock, and Grigorij~A Margulis.
\newblock Khintchine-type theorems on manifolds: the convergence case for
  standard and multiplicative versions.
\newblock {\em International Mathematics Research Notices}, 2001(9):453--486,
  2001.

\bibitem[BRV16]{survey}
Victor Beresnevich, Felipe Ram\'irez, and Sanju Velani.
\newblock Metric diophantine approximation: aspects of recent work.
\newblock In {\em Dynamics and analytic number theory}, pages 1--95, 2016.

\bibitem[Bug09]{bugeaud09}
Yann Bugeaud.
\newblock Multiplicative diophantine approximation.
\newblock {\em Dynamical systems and Diophantine approximation, S{\'e}min.
  Congr}, 19:105--125, 2009.

\bibitem[BVVZ17]{BRVZ17}
Victor Beresnevich, Robert~C Vaughan, Sanju Velani, and Evgeniy Zorin.
\newblock Diophantine approximation on manifolds and the distribution of
  rational points: contributions to the convergence theory.
\newblock {\em International Mathematics Research Notices},
  2017(10):2885--2908, 2017.

\bibitem[Gho05]{ghosh05}
Anish Ghosh.
\newblock A khintchine-type theorem for hyperplanes.
\newblock {\em Journal of the London Mathematical Society}, 72(2):293--304,
  2005.

\bibitem[Gho11]{ghosh11}
Anish Ghosh.
\newblock A khintchine--groshev theorem for affine hyperplanes.
\newblock {\em International Journal of Number Theory}, 7(04):1045--1064, 2011.

\bibitem[HL19]{huang2018}
Jing-Jing Huang and Jason~J Liu.
\newblock {Simultaneous Approximation on Affine Subspaces}.
\newblock {\em International Mathematics Research Notices}, 11 2019.

\bibitem[HSS18]{GBSP}
Mumtaz Hussain, Johannes Schleischitz, and David Simmons.
\newblock The generalised baker--schmidt problem on hypersurfaces.
\newblock {\em arXiv preprint arXiv:1803.02314}, 2018.

\bibitem[Khi26]{khintchine1926}
Aleksandr Khintchine.
\newblock Zur metrischen theorie der diophantischen approximationen.
\newblock {\em Mathematische Zeitschrift}, 24(1):706--714, 1926.

\bibitem[Mon94]{montgomery1994ten}
Hugh~L Montgomery.
\newblock {\em Ten lectures on the interface between analytic number theory and
  harmonic analysis}.
\newblock Number~84. American Mathematical Soc., 1994.

\bibitem[Ram15]{R15}
Felipe Ram\'irez.
\newblock Khintchine types of translated coordinate hyperplanes.
\newblock {\em Acta Arithmetica 170}, 2015.

\bibitem[RSS17]{RSS17}
F.~Ram\'irez, D.S. Simmons, and F.~S\"uess.
\newblock {Rational approximation of affine coordinate subspaces of Euclidean
  space}.
\newblock {\em Acta Arithmetica 177}, pages 91--100, 2017.

\bibitem[Sim18]{simmons2018}
David Simmons.
\newblock Some manifolds of khinchin type for convergence.
\newblock {\em Journal de Th{\'e}orie des Nombres de Bordeaux}, 30(1):175--193,
  2018.

\end{thebibliography}

\end{document}